\theoremstyle{plain}
\newtheorem{thm}{Theorem}[section]
\newtheorem{lem}[thm]{Lemma}
\newtheorem{cor}[thm]{Corollary}
\newtheorem{prop}[thm]{Proposition}
\newtheorem{conj}[thm]{Conjecture}
\theoremstyle{definition}
\newtheorem{defin}[thm]{Definition}
\newtheorem{exam}[thm]{Example}
\newtheorem{rem}[thm]{Remark}
\newtheorem*{exam*}{Example}
\newcommand{\calV}{\mathcal{V}}
\newcommand{\calD}{\mathcal{D}}
\newcommand{\calP}{\mathcal{P}}
\newcommand{\calY}{\mathcal{Y}}
\newcommand{\T}{\mathbb{T}}
\newcommand{\m}{\mathfrak{m}}
\newcommand{\Hom}{\mathrm{Hom}}
\newcommand{\p}{\mathfrak{p}}
\newcommand{\subP}{_{(\p)}}
\newcommand{\Dsg}{\calD_\text{sg}}
\newcommand{\Spec}{\mathrm{Spec}}
\title[Nucleus and Support of Singularity Categories]{The Nucleus of a Compact Lie Group, and Support of Singularity Categories}
\author{Thomas Peirce}
\thanks{The author is supported by the Warwick Mathematics Institute Centre for Doctoral Training, and is grateful for funding by the UK EPSRC (Grant number: \texttt{EP/W523793/1}).}
\email{Thomas.Peirce@warwick.ac.uk}
\begin{document}
\begin{abstract}
	In this paper we adapt the notion of the nucleus defined by Benson, Carlson, and Robinson to compact Lie groups in non-modular characteristic. We show that it describes the singularities of the projective scheme of the cohomology of its classifying space. A notion of support for singularity categories of ring spectra (in the sense of Greenlees and Stevenson) is established, and is shown to be precisely the nucleus in this case, consistent with a conjecture of Benson and Greenlees for finite groups.
\end{abstract}
\maketitle

%\tableofcontents
\section{Introduction}
In modular representation theory, the nucleus $\calY_G$ of a finite group $G$ over a field $k$ of positive characteristic, defined by Benson, Carlson, and Robinson in \cite{BCR}, is a homogeneous subvariety of the group variety that has equivalent \cite{Benson1995} characterisations:\begin{itemize}
	\item The ``structural'' definition as the union of subvarieties of the group variety associated to elementary Abelian $p$-subgroups whose centralisers are not $p$-nilpotent.
	\item The ``representation theoretic'' definition as the union of subvarieties associated to non-projective modules $M$ in the principal block with $H^*_\text{gp}(G,M)$ vanishing in positive codegrees.
\end{itemize}
The vanishing of the nucleus, or its containment in $\{0\}$, often yields powerful consequences for the group algebra $kG$ (or, to put it another way, the nucleus is a measure of the failure of these properties). For example, in \cite{BCRickardThick} it is shown that, when the nucleus is contained in $\{0\}$, thick subcategories of the stable module category are determined by certain subsets of $\Spec\,H^*_\text{gp}(G,k)$.

Recently in \cite{benson2023modules}, Benson and Greenlees made a number of conjectures regarding various other triangulated categories associated to the group algebra, including the DG algebra of cochains $C^*(BG,k)$, which by the Rothenberg-Steenrod construction are the derived endomorphisms of $k$ over $kG$ (see \cite{ComplexInj}). The collection of conjectures are shown to be true whenever the nucleus is contained in $\{0\}$, and many are equivalent to each other in general.

The relation of the nucleus to $C^*(BG,k)$ is a chief motivation for this paper - cochains on classifying spaces may also be considered for general topological groups, and so we consider the ``increase in dimension'' one gets by generalising finite groups to compact Lie groups.

The nucleus we define for compact Lie groups is based on the structural definition. To do this, we make the observation that $p$-nilpotency of a finite group $G$ is equivalent to the cochains $C^*(BG,\mathbb{F}_p)$ being regular as a DG algebra or ring spectrum, in the sense of Dwyer, Greenlees, and Iyengar \cite{DualityIn}.

In some sense, the passage to compact Lie groups made here is the first step: the definition is made for characteristic zero (or positive characteristics coprime to the order of the Weyl group), thus the behaviour we measure comes only from taking Lie groups of positive rank, rather than any modular representation-theoretic difficulties which occur in generality.

This assumption means the theory is almost entirely commutative-algebraic in flavour. This comes from the formality of $C^*(BG,k)$ in these hypotheses. Among many other simplifications, this tells us that regularity of cochains is equivalent to having polynomial cohomology - a condition well understood in invariant theory. This is recalled in the appendix.

In particular, our main theorem describing the nucleus (Theorem \ref{MainThm}) as the homogeneous singular locus is proven by doing the majority of the work in the more abstract context of invariant theory. That is, after defining the nucleus in Section \ref{DefinitionSection}, we classify the singular locus of the projective variety associated to a general invariant ring in Section \ref{InvSection}: there is little relevant to the precise topology and group theory of the Lie group beyond the action of the Weyl group on the torus.

Section \ref{SupportSection} then considers an analogy of a conjecture in the first version of the preprint \cite{benson2023modules} concerning the singularity category $\Dsg(C^*(BG))$ as defined by Greenlees and Stevenson \cite{MoritaAndSing}. Namely the nucleus should be the support of this category - in this section we give a reasonable notion of support and show that, in the context of Lie groups, it does indeed line up with the nucleus (indeed, it is precisely the singular locus of the cohomology ring).
%The section ends with some musings about what this singular support should precisely mean, lining up with theorems of Takahashi \cite{SingSubCatsRyo} and Stevenson \cite{SingSubCatsGreg} for discrete hypersurface rings.

\subsection{Conventions and Terminology}
Throughout this paper any grading shall be homological. A reader who objects to this is welcome to ignore the prefix ``co-'' in the word ``codegree''. In particular, differential graded (DG) algebras are monoids in the category of \textit{chain} complexes, and by the result of Shipley \cite{HZAlg} may also be considered as ring spectra. We refer to the homology/homotopy groups variously as the \textbf{coefficients}.

The various triangulated categories discussed are based on the derived category $\calD(R)$ of a ring spectrum $R$, that is the homotopy category of $R$-modules. In particular we have the category $\calD^c(R)$ of \textbf{small} or \textbf{compact} modules, which for a discrete ring $R$ are precisely those chain complexes equivalent to a bounded complex of finitely generated projectives.

A full triangulated subcategory $\mathcal{S}$ of a triangulated category $\mathcal{T}$ is \textbf{thick} if it is closed under retracts, and we denote by $\mathrm{Thick}(M)$ the smallest thick subcategory of containing $M$. We say $M$ \textbf{finitely builds} $N$ if $N\in\mathrm{Thick}(M)$, and recall that for a ring spectrum $R$, $\calD^c(R)=\mathrm{Thick}_R(R)$.

The other important subcategory at play is that of finitely generated modules, $\calD^b(R)$, called the \textbf{bounded derived category}. An exact description of this is given via normalisations in \cite{MoritaAndSing} but for most of this paper, this subcategory consists of \textit{coefficient-}finitely generated modules, that is those $M$ for which the coefficients $M_*$ are finitely generated as a graded module over $R_*$. The singularity category $\Dsg(R)$ is then the Verdier quotient of $\calD^b(R)$ by $\calD^c(R)$ - notice its thick subcategories are in correspondence with those of $\calD^b(R)$ containing $R$.

Unless otherwise stated (such as in the appendices), the tensor product and internal hom in categories are assumed to be derived. All subgroups of compact Lie groups are assumed to be closed.

\section{Group Varieties and the Nucleus}\label{DefinitionSection}
\subsection{Notation and Torus Subgroups}
We start by introducing the language of group varieties to topological groups.
\begin{defin}
	Let $G$ be a topological group and $k$ a field. We define the \textbf{group variety} $$\calV_G:=\mathrm{Spec}^h\,H^*(BG,k)$$ to be the set of homogeneous prime ideals of the cohomology ring of the classifying space $BG$.

	In general, this cohomology ring is not strictly commutative, however we can make sense of the spectrum by noting that the quotient of $H^*BG$ by its nilradical \textit{is} strictly commutative. In the context we consider ($G$ being compact Lie with Weyl group of invertible order), the cohomology ring will always be evenly graded and thus strictly commutative (and indeed a domain), and so these issues need not be considered.

	We endow $\calV_G$ with the Zariski topology. Write $I_G$ for the \textbf{irrelevant ideal}, that is the unique maximal homogeneous ideal of $H^*(BG,k)$ consisting of elements of strictly positive codegree. Considered as a geometric point in $\calV_G$, we denote $I_G$ by $0$, which is to be understood as the origin. Removing this, we obtain the projective variety $$\calP_G:=\mathrm{Proj}(H^*(BG,k))=\calV_G\setminus\{0\}$$
	By \textbf{points} of $\calP_G$ we refer to maximal ideals (that is, those maximal in $\calP_G$ as a poset with containment), and the \textbf{points} of $\calV_G$ are points of $\calP_G$ as well as the origin $0$.

	Given a subgroup $H\leq G$, we refer to the image of $\calV_H\to\calV_G$ as the \textbf{support} of $H$.
\end{defin}
\begin{exam}
	If $G\cong\mathbb{T}^n$ is a torus, then $\calV_G$ is homogeneous affine $n$-space $\mathbb{A}^n_k$. Then, removing the origin, $\calP_G$ is projective space $\mathbb{P}^{n-1}_k$, which is empty for $n=0$. If $k$ is algebraically closed, the points of $\calP_G$ are those of the form $[z_1:\dots:z_n]$ in the classical sense.
\end{exam}

\begin{lem}\label{TorusAndLinSubspace}
	Let $k$ be an algebraically closed field, and $G=\T^n$ a torus. The assignment $H\mapsto\calP_H(k)\subseteq\calP_G$ defines a surjection from the set of circle subgroups of $G$ to points of $\calP_G$ that can be expressed as $[z_1:\dots:z_n]$ with the $z_i$ lying in the image of $\mathbb{Z}$ in $k$.

	In particular, if $k$ has characteristic zero, this defines a bijection.
\end{lem}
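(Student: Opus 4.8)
\emph{Proof sketch.} The plan is to route everything through the classical classification of circle subgroups of $\T^n$ by primitive integer vectors, compute the maps induced on cohomology explicitly, and then reduce both surjectivity and injectivity to elementary arithmetic of lattices.

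To set up, recall that $H^*(B\T^n,k)\cong k[x_1,\dots,x_n]$ with each $x_i$ of codegree $2$, so that $\calV_G=\mathbb{A}^n_k$ and $\calP_G=\mathbb{P}^{n-1}_k$, as in the example above. Every circle subgroup of $\T^n$ arises as the image of a homomorphism $\phi_v\colon\T\to\T^n$; since $\Hom(\T,\T^n)\cong\mathbb{Z}^n$ and $\phi_v$ has the same image as $\phi_{v_0}$ whenever $v=dv_0$, the circle subgroups are in bijection with the primitive vectors $v\in\mathbb{Z}^n$ (those with $\gcd(v_1,\dots,v_n)=1$) taken modulo sign. Applying classifying spaces, $B\phi_v$ induces the ring map $k[x_1,\dots,x_n]\to H^*(B\T,k)=k[t]$ sending $x_i\mapsto\bar v_i t$, where $\bar v_i$ denotes the image of $v_i\in\mathbb{Z}$ in $k$; this is because on $H^2(-;\mathbb{Z})$ the induced map is dual to $v\colon\mathbb{Z}\to\mathbb{Z}^n$ and $H^*(B\T^n,-)$ is generated in codegree $2$.

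Next I would observe that a primitive $v$ has nonzero reduction $\bar v\in k^n$, since for each prime $p$ some coordinate $v_i$ fails to be divisible by $p$ (and in characteristic zero this is immediate). Hence $k[\bar v_1 t,\dots,\bar v_n t]=k[t]$, so the ring map above is a surjection whose kernel $\p_v$ is a homogeneous prime; geometrically, the support of $H=\phi_v(\T)$ in $\calV_G=\mathbb{A}^n_k$ is the line $k\cdot\bar v$, and intersecting with $\calP_G$ leaves exactly the point $[\bar v_1:\dots:\bar v_n]$, which is the unique (since $k$ is algebraically closed) closed point of $\mathbb{P}^{n-1}_k$ cut out by $\p_v$. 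This is plainly a point whose coordinates lie in the image of $\mathbb{Z}$ in $k$. Conversely, given any point $[z_1:\dots:z_n]$ of $\calP_G$ with all $z_i$ in the image of $\mathbb{Z}$, choose integer lifts $a_i$ with $\bar a_i=z_i$; since $[z_1:\dots:z_n]$ is a genuine projective point, some $z_i\neq 0$, so $g:=\gcd(a_1,\dots,a_n)$ is not divisible by the characteristic and is therefore a unit in $k$, whence $v:=(a_1/g,\dots,a_n/g)$ is primitive with $[\bar v_1:\dots:\bar v_n]=[z_1:\dots:z_n]$. Then $\phi_v(\T)$ is sent to the given point, establishing surjectivity.

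Finally, for the characteristic zero statement I would use that $\mathbb{Z}\hookrightarrow k$: if $\phi_v(\T)$ and $\phi_w(\T)$ (with $v,w$ primitive) are sent to the same point of $\calP_G$, then $\bar v$ and $\bar w$ are proportional in $k^n$, hence $v$ and $w$ are proportional over $\mathbb{Q}$, and two proportional primitive integer vectors agree up to sign; therefore $\phi_v(\T)=\phi_w(\T)$, which gives injectivity and hence the asserted bijection. I do not expect a serious obstacle here; the only step demanding care is the reduction-mod-$p$ bookkeeping — keeping track of when clearing a common factor, or dividing by a scalar, remains legitimate over $k$ — but this becomes routine once the formula $x_i\mapsto\bar v_i t$ is in hand.
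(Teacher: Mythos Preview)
Your proof is correct and follows essentially the same route as the paper's: parametrise circle subgroups by primitive integer tuples up to sign, compute the induced map $x_i\mapsto \bar v_i t$ on cohomology, and read off the image point $[\bar v_1:\cdots:\bar v_n]$. Your argument is in fact more careful than the paper's on two counts: you verify explicitly that a primitive $v$ has nonzero reduction modulo $p$, and for surjectivity you spell out the step of lifting the $z_i$ to integers and dividing by their $\gcd$ (which is a unit in $k$), where the paper simply asserts the surjection is evident.
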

\begin{proof}
	A map $\T\to G$ is defined by an $n$-tuple of integers $(r_1,\dots,r_n)$. For such a map to be injective, it is required that not all $r_i$ are zero, and the nonzero $r_i$ are mutually coprime. The corresponding map $\calV_\T\to\calV_G$ restricts to $\calP_\T=\text{pt}\to\calP_G$ with image $[r_1:\dots:r_n]$. We evidently get a required surjection.

	Two such tuples $(r_i)$ and $(s_i)$ of the above form define the same circle subgroup if and only if $(r_i)=\pm(s_i)$, and one can see in characteristic $0$ that a ratio $[x_1:\dots:x_n]$ of rational numbers can be expressed in this form, also uniquely up to sign, giving a bijection.
\end{proof}

We can generalise this to all torus subgroups - we instead phrase the correspondence algebraically as linear subspaces of $H_2(BT)$, knowing that these will then define hyperplanes in $\calV_G$ and $\calP_G$.

\begin{lem}\label{TheCorresp}
	Let $k$ be an algebraically closed field, and $G=\T^n$ a torus. Give $H_2(BG)\cong k^{\oplus n}$ the basis given by each factor of $G$. For $r\geq 0$, the map $T\mapsto H_2(BT)$ is a surjection from rank $r$ torus subgroups to $r$-dimensional linear subspaces of $H_2(BG)$ spanned by vectors with integral coordinates.
\end{lem}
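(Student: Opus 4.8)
The plan is to identify $H_2(BG)$ with the base-changed fundamental group of $G$ and then run the argument through the standard dictionary between subtori of $\T^n$ and saturated sublattices of $\mathbb{Z}^{\oplus n}$. First I would record the natural isomorphism $H_2(BG;k)\cong\pi_1(G)\otimes_{\mathbb{Z}}k$: since $BG=B\T^n$ is simply connected with $H_2(B\T^n;\mathbb{Z})\cong\mathbb{Z}^{\oplus n}$, the Hurewicz map gives $H_2(BG;\mathbb{Z})\cong\pi_2(BG)\cong\pi_1(G)$, there is no $\mathrm{Tor}$ obstruction to base change as $\pi_1(G)$ is free, and all of this is functorial in the torus. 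Under this identification the basis of $H_2(BG)$ coming from the factors of $G$ is the standard basis of $\mathbb{Z}^{\oplus n}$, and (reading the assignment as $T\mapsto$ the image of $H_2(BT)$ in $H_2(BG)$) the map of the Lemma sends a subtorus $T\leq G$ to the image of $\pi_1(T)\otimes k\to\pi_1(G)\otimes k=k^{\oplus n}$, i.e. to the $k$-span of the sublattice $\pi_1(T)\subseteq\mathbb{Z}^{\oplus n}$.

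Next I would invoke the classification of subtori: $T\mapsto\pi_1(T)$ is a bijection from rank-$r$ subtori of $G$ to rank-$r$ \emph{saturated} sublattices $L\subseteq\mathbb{Z}^{\oplus n}$ (those with $\mathbb{Z}^{\oplus n}/L$ torsion-free), with inverse $L\mapsto(L\otimes_{\mathbb{Z}}\mathbb{R})/L\hookrightarrow\mathbb{R}^n/\mathbb{Z}^n=\T^n$. For the direction needed here it suffices to apply $\pi_*$ to the fibration $T\to G\to G/T$: as $G/T$ is again a torus we have $\pi_2(G/T)=0$ and $\pi_1(G/T)$ free, so $0\to\pi_1(T)\to\pi_1(G)\to\pi_1(G/T)\to 0$ is exact and splits, exhibiting $\pi_1(T)$ as a rank-$r$ direct summand of $\mathbb{Z}^{\oplus n}$. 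Hence $\pi_1(T)\otimes k\hookrightarrow k^{\oplus n}$ has $r$-dimensional image, spanned by the images of a $\mathbb{Z}$-basis of $\pi_1(T)$, i.e. by integral vectors; so the map does land in the asserted set of $r$-dimensional integrally-spanned subspaces.

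For surjectivity, take an $r$-dimensional subspace $V\subseteq H_2(BG)=k^{\oplus n}$ admitting a spanning set of integral vectors and pick $v_1,\dots,v_r\in\mathbb{Z}^{\oplus n}$ whose images form a $k$-basis of $V$. I would first observe these are linearly independent over $\mathbb{Q}$: a primitive integral relation $\sum_i a_iv_i=0$ would have some $a_i$ not divisible by $\mathrm{char}\,k$, hence reduce to a nontrivial $k$-linear relation among the $v_i$ in $k^{\oplus n}$, a contradiction. Then $L:=(\mathbb{Q}v_1+\dots+\mathbb{Q}v_r)\cap\mathbb{Z}^{\oplus n}$ is a saturated rank-$r$ sublattice, so corresponds to a rank-$r$ subtorus $T\leq G$; by the previous paragraph $H_2(BT)$ is the image of $L\otimes k$ in $k^{\oplus n}$, which is $r$-dimensional and contains each $v_i$, hence equals $V$. (Equivalently one can take $T$ to be the subgroup generated by the circle subgroups attached to the $v_i$ by Lemma \ref{TorusAndLinSubspace}; the image of a torus being closed, $T$ is a subtorus, and $\pi_1(T)$ is exactly the saturation of $\sum_i\mathbb{Z}v_i$.)

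Everything outside the positive-characteristic bookkeeping is standard Lie theory of tori, so I expect the only point genuinely needing care to be the interplay of the integral structures over $\mathbb{Z}$, $\mathbb{Q}$ and $k$ — that integral vectors independent over $k$ stay independent over $\mathbb{Q}$, and that passing to the saturation does not shrink the $k$-span — both of which reduce to the primitive-relation argument together with freeness of $\mathbb{Z}^{\oplus n}/L$.
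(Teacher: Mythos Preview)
Your argument is correct and considerably more careful than the paper's own proof, which is a two-line reduction to the rank-one case of Lemma~\ref{TorusAndLinSubspace}: the paper simply observes that a rank-$r$ subtorus factors into $r$ circles, each contributing a line in $H_2(BG)$, and conversely that $r$ integral lines give $r$ circle subgroups whose product is the desired torus. Your route via the dictionary $T\mapsto\pi_1(T)$ between subtori and saturated sublattices is a genuinely different organisation of the same content, and it buys you an honest treatment of the two points the paper leaves implicit: that integral vectors $k$-independent remain $\mathbb{Q}$-independent (your primitive-relation argument), and that one must saturate before producing a subtorus so that $L\otimes k\hookrightarrow k^{\oplus n}$ stays injective. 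Your parenthetical at the end is essentially the paper's proof verbatim, so you have in fact given both arguments.
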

%\begin{rem} This is no longer obviously a bijection in characteristic zero, and we do not need it for this paper.
%\end{rem}
\begin{proof}
	When $r=1$, this is as above, now defining a line rather than a point in projective space. A higher rank torus is then given by embeddings of each circle factor, that define linearly independent lines in $H_2(BG)$.
\end{proof}

\subsection{The Nucleus}
Having established the relevant language of varieties, we now consider the main definition. Throughout $k$ shall be an algebraically closed field of characteristic $p\geq0$, and $G$ a compact Lie group whose Weyl group has order not divisible by $p$.

\begin{defin}
	Let $H$ be a torus subgroup of $G$. Call $H$ \textbf{nuclear} if the cochains $C^*(BC_G(H),k)$ are not regular (as a ring spectrum in the sense of \cite{DualityIn}), where $C_G(H)$ denotes the centraliser of $H$ in $G$.

	Suppose $G$ is a compact Lie group. We define the \textbf{nucleus} of $G$ as the union of homogeneous subvarieties $$\calY_G=\bigcup_H\mathrm{Im}(\calV_H\to\calV_G)\,\subseteq\calV_G$$
	taken over nuclear torus subgroups $H$.
\end{defin}
Nuclear subgroups are currently fairly abstractly defined, however they do have a more concrete description:
\begin{lem}
	Let $H$ be a torus subgroup of $G$. The following are equivalent:
	\begin{enumerate}[i)]\item $H$ is \textbf{not} nuclear.
		\item $H^*(BC_G(H),k)$ is a polynomial ring.
		\item Given a maximal torus $T$ containing $H$, the subgroup $C_G(H)/T$ of the Weyl group $N_G(T)/T$ acts on $H_2(BT,k)$ as a pseudoreflection group.
	\end{enumerate}
\end{lem}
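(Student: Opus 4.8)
The plan is to prove $(i)\Leftrightarrow(ii)$ by feeding the centraliser into the results of the appendix, and to prove $(ii)\Leftrightarrow(iii)$ by combining Borel's description of the cohomology of a classifying space with the Chevalley--Shephard--Todd theorem in the non-modular case.

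The first step is to notice that the standing hypotheses are inherited by $C_G(H)$. Since $H$ is a torus it is abelian, and as $H\leq T$ this forces $T\leq C_G(H)$; because every torus of $G$ lies in a maximal torus and $T$ is already maximal in $G$, the torus $T$ is a maximal torus of $C_G(H)$ as well. Hence the Weyl group $W':=N_{C_G(H)}(T)/T$ of $C_G(H)$ — which is what the statement abbreviates as $C_G(H)/T$ (note $N_{C_G(H)}(T)=N_G(T)\cap C_G(H)$) — is a subgroup of $W_G:=N_G(T)/T$, so $|W'|$ divides $|W_G|$ and in particular is prime to $p$. Thus $C_G(H)$ is again a compact Lie group to which the appendix applies: there, formality of cochains under the standing hypothesis on $p$ reduces regularity of $C^*(BC_G(H),k)$ to the condition that its coefficient ring $H^*(BC_G(H),k)$ be a graded polynomial ring. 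Since $H$ is not nuclear precisely when $C^*(BC_G(H),k)$ is regular, this yields $(i)\Leftrightarrow(ii)$.

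For $(ii)\Leftrightarrow(iii)$ I would first invoke Borel's theorem: as $|W'|$ is invertible in $k$, restriction along $BT\hookrightarrow BC_G(H)$ is injective with image the invariant subring, so $H^*(BC_G(H),k)\cong H^*(BT,k)^{W'}$, the action of $W'$ being the one induced on $H_2(BT,k)$ (equivalently on the dual $H^2(BT,k)$, on which the polynomial ring $H^*(BT,k)=\mathrm{Sym}_k H^2(BT,k)$ is generated). Applying the Chevalley--Shephard--Todd theorem in the non-modular regime ($|W'|$ invertible in $k$), the invariant subring $H^*(BT,k)^{W'}$ is itself a polynomial ring if and only if the image of $W'$ in $\mathrm{GL}(H_2(BT,k))$ is generated by pseudoreflections, i.e.\ $W'$ acts as a pseudoreflection group. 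Since a subgroup of $\mathrm{GL}(V)$ is generated by pseudoreflections exactly when its image in $\mathrm{GL}(V^\vee)$ is, it is immaterial whether $(iii)$ is phrased using $H_2$ or $H^2$, and we obtain $(ii)\Leftrightarrow(iii)$.

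The routine content here is the lattice bookkeeping around $T\leq C_G(H)$. The one point genuinely needing care is the use of Borel's theorem when $G$, and hence $C_G(H)$, is disconnected: there one combines the connected-case statement for the identity component with a transfer argument over $\pi_0$, using that $N_{C_G(H)}(T)$ meets every component of $C_G(H)$, so that $W'/W'_0\cong\pi_0 C_G(H)$ acting compatibly — this is where invertibility of $|W'|$ is really used. Relatedly, for disconnected $G$ the $W'$-action on $H_2(BT,k)$ may have nontrivial kernel, but this is harmless: it alters neither the invariant ring nor condition $(iii)$ under the ``image in $\mathrm{GL}$'' reading, and I would note this explicitly to keep the statement unambiguous.
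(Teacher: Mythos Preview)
Your proof is correct and follows the same approach as the paper: formality plus the appendix for $(i)\Leftrightarrow(ii)$, and Borel's identification $H^*(BC_G(H),k)\cong H^*(BT,k)^{W'}$ together with Chevalley--Shephard--Todd for $(ii)\Leftrightarrow(iii)$. Your version is more explicit than the paper's (which just cites the two ingredients), in particular in checking that $T$ is maximal in $C_G(H)$ so that the standing hypothesis on the Weyl group order is inherited, and in clarifying that ``$C_G(H)/T$'' really means $N_{C_G(H)}(T)/T$.
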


\begin{proof}
	The equivalence of i) and ii) is due to the formality of $C^*(BG)$ in this context - see \ref{BGFormal} and \ref{LocalSmallCor}

	The equivalence of ii) and iii) is a consequence of the Chevalley-Shepherd-Todd theorem (\cite{ShepherdTodd}, or see \cite{PolyInv} for arbitrary fields), since $$H^*(BC_G(H),k)=H^*(BT,k)^{C_G(H)/T}$$
\end{proof}

We use this to prove another useful fact about nuclear subgroups:

\begin{lem}\label{SubgroupNuclear} Let $K$ and $H$ be torus subgroups of $G$, contained in a maximal torus $T$. If $C_G(K)\subseteq C_G(H)$ and $K$ is nuclear, then $H$ is nuclear. In particular a torus subgroup of a nuclear subgroup is nuclear.
\end{lem}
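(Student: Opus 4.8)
The plan is to prove the contrapositive of the first assertion: assuming $H$ is not nuclear, I will show $K$ is not nuclear, and then the final sentence follows formally.

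First I would translate everything through the preceding lemma. Fix a maximal torus $T$ with $K,H\leq T$, write $W=N_G(T)/T$, and for a torus subgroup $L\leq T$ let $W_L=C_G(L)/T\leq W$, acting on $V:=H_2(BT,k)$; by that lemma, $L$ is \emph{not} nuclear precisely when $W_L$ acts on $V$ as a pseudoreflection group, equivalently when $H^*(BC_G(L),k)=H^*(BT,k)^{W_L}$ is polynomial. Two observations: (a) $W_L$ is exactly the pointwise stabiliser in $W$ of the subspace $H_2(BL,k)\subseteq V$, since an element of $N_G(T)$ centralises the torus $L$ if and only if conjugation by it acts trivially on $\pi_1 L=H_2(BL;\mathbb{Z})$, which, as $|W|$ is invertible in $k$, may be checked after tensoring with $k$; and (b) the hypothesis $C_G(K)\subseteq C_G(H)$ forces $W_K\subseteq W_H$ (both contain $T$). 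Combining (a) and (b), $W_K$ is precisely the pointwise stabiliser in $W_H$ of the subspace $U:=H_2(BK,k)$.

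The key input is Steinberg's theorem on pointwise stabilisers: for a finite group acting on $V$ over a field in which its order is invertible, if the group is generated by pseudoreflections then so is the pointwise stabiliser of any subspace --- indeed it is generated by those pseudoreflections of the group that fix the subspace. If $H$ is not nuclear, then $W_H$ acts on $V$ as a pseudoreflection group, so applying Steinberg's theorem to it and to the subspace $U$ shows that $W_K$ also acts on $V$ as a group generated by pseudoreflections. By the preceding lemma (equivalently, since such a group has polynomial invariants in the non-modular setting), $K$ is not nuclear, which is the contrapositive we wanted.

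For the last sentence, if $K\leq H$ are torus subgroups with $H$ nuclear, pick a maximal torus $T\supseteq H$; then $C_G(H)\subseteq C_G(K)$, and the statement just proved, applied with $H$ and $K$ interchanged, gives that $K$ is nuclear. The one genuinely external ingredient --- and the step needing care --- is Steinberg's pointwise-stabiliser theorem in this generality: over $\mathbb{C}$ it is classical for real and for complex reflection groups, and the non-modular case runs in parallel (or can be reduced to characteristic zero, all the actions in sight factoring through the finite group $W$), consistent with the form of the Chevalley--Shepherd--Todd theorem already taken from \cite{PolyInv}. The remaining steps are routine bookkeeping with centralisers and relative Weyl groups.
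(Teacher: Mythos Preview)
Your proof is correct and takes a somewhat different route from the paper's. The paper also argues the contrapositive, but phrases everything in terms of the invariant rings: with $W_K=C_G(K)/T\subseteq W_H=C_G(H)/T$, if $A=H^*(BT)^{W_H}$ is polynomial then it is a Noether normalisation of the Cohen--Macaulay ring $A'=H^*(BT)^{W_K}$, hence $A'$ is free over $A$, and one then appeals to results in \cite{PolyInv} (4.3.6 and 6.2.3) to conclude that $A'$ is polynomial. Your argument instead isolates the geometric fact that $W_K$ is the \emph{pointwise stabiliser} in $W_H$ of the subspace $H_2(BK,k)\subseteq H_2(BT,k)$, and then invokes Steinberg's theorem directly. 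This is arguably cleaner: read na\"ively, the paper's chain of implications would suggest that ``$A'$ free over polynomial $A$'' alone forces $A'$ to be polynomial, which fails for an arbitrary subgroup $W_K\leq W_H$ (take $A_3\leq S_3$ on the standard reflection representation); the cited invariant-theory results must in effect be supplying the stabiliser input, whereas you make it explicit. Your verification of claim~(a)---that triviality of the $W$-action on $H_2(BK;k)$ forces triviality on $H_2(BK;\mathbb{Z})$ when $|W|$ is invertible in $k$---is exactly the small lemma needed here and is correct (a finite-order integer matrix congruent to the identity modulo a prime not dividing its order must be the identity). The reduction of Steinberg's theorem to characteristic zero via the integral $W$-action is also sound, since for a finite-order integer matrix with order prime to $p$ the eigenvalue multiplicities agree over $\overline{\mathbb{Q}}$ and over $k$.
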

\begin{proof} There is an inclusion $C_G(K)/T\subseteq C_G(H)/T$ and thus a finite extension of algebras $$A=H^*(BT)^{C_G(H)/T}\subseteq A'=H^*(BT)^{C_G(K)/T}$$
	If $H$ is not nuclear, $A$ is a polynomial ring, and thus a Noether Normalisation of $A'$. Since $A'$ is Cohen-Macaulay, $A'$ is thus a free $A$-module. It follows that $A'$ must also be polynomial so $K$ cannot be nuclear. The relevant results of invariant theory can be found in \cite{PolyInv}, 4.3.6 and 6.2.3.
\end{proof}
\begin{rem}\label{OriginSubtlety}
	In particular, this lemma implies that if any positive rank torus is nuclear, the trivial group is nuclear. Since the support of the trivial group is $\{0\}$, this means that if $0\not\in\calY_G$, then $\calY_G=\emptyset$.

	This is a subtlety in including $0$ in the definition of the nucleus. One should understand the nucleus as \textbf{trivial} if $\calY_G\subseteq\{0\}$, which is to say the only possible nuclear subgroup is the trivial subgroup. Then $\calY_G=\emptyset$ if and only if $H^*BG$ is a polynomial ring, but one can construct many examples where the nucleus is trivial but $H^*BG$ is not polynomial (see Example \ref{SegreExample}).
\end{rem}
We now state the main theorem:
\begin{thm}\label{MainThm}
	Let $k$ be an algebraically closed field, and $G$ a compact Lie group whose Weyl group has order invertible in $k$. The points of $\calY_G$ are precisely the singular points of $\calV_G$.

	Equivalently, the points of $\calY_G\setminus\{0\}$ are the singular points of $\calP_G$.
\end{thm}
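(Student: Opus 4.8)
The plan is to transport the whole question into invariant theory and then appeal to the classification of singular loci carried out in Section~\ref{InvSection}. Fix a maximal torus $T\leq G$ with Weyl group $W=N_G(T)/T$ and set $V:=H_2(BT,k)$, an affine space carrying a linear $W$-action; recall that $|W|$ invertible gives $H^*(BG,k)\cong H^*(BT,k)^W$ and, for a torus subgroup $H\leq T$, $H^*(BC_G(H),k)\cong H^*(BT,k)^{C_G(H)/T}$. Thus $\calV_G=V/W$, whose closed points (as $k=\overline{k}$) are the $W$-orbits of closed points of $V$. Since $\calY_G$ and the singular locus of $\calV_G$ are both unions of cones (they are stable under scaling), whether a point in the sense of the statement lies in either of them is detected at any closed point of $\Spec H^*(BG,k)$ in the corresponding cone, so it is enough to compare the two on closed points of $\Spec H^*(BG,k)$, and the $\calP_G$ version then follows by discarding the origin. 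The one fact I would quote from Section~\ref{InvSection} is that a closed point $\bar x\in V/W$, the image of $x\in V$, is a regular point precisely when its stabiliser $W_x\leq W$ acts on $V$ as a pseudoreflection group --- equivalently, by the Chevalley-Shepherd-Todd theorem, precisely when $H^*(BT,k)^{W_x}$ is a polynomial ring.

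I would then unwind the nucleus in these terms. For a torus subgroup $H\leq T$, the surjection $H^*(BT,k)\twoheadrightarrow H^*(BH,k)$ realises $\calV_H$ as the linear subspace $U_H:=H_2(BH,k)\subseteq V$, so $\mathrm{Im}(\calV_H\to\calV_G)$ is the image of $U_H$ under $V\to V/W$; and, automorphisms of a torus being detected on $\pi_1$, the subgroup $C_G(H)/T\leq W$ is exactly the pointwise stabiliser $W^{(U_H)}:=\{w\in W:w|_{U_H}=\mathrm{id}\}$. By Lemma~\ref{TheCorresp} the subspaces occurring as some $U_H$ are exactly those spanned by vectors with integral coordinates, and $H$ is nuclear precisely when $W^{(U_H)}$ does not act on $V$ as a pseudoreflection group. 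Hence $\calY_G$ is the union, over integral subspaces $U\subseteq V$ whose pointwise stabiliser in $W$ is not a pseudoreflection group, of the images of $U$ in $V/W$, and the theorem reduces to showing that, for a closed point $x\in V$, one has $\bar x\in\calY_G$ if and only if $W_x$ does not act on $V$ as a pseudoreflection group.

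For ``$\Leftarrow$'' I would take $U:=\bigcap_{w\in W_x}\ker(w-\mathrm{id}\colon V\to V)$. Since every $w\in W$ has order invertible in $k$, the subspace $\ker(w-\mathrm{id})$ is spanned by the $w$-fixed vectors of the lattice $H_2(BT,\mathbb{Z})$, hence is integral, and so is $U$; by construction $x\in U$ and $W^{(U)}=W_x$. The torus subgroup $H$ with $U_H=U$ (Lemma~\ref{TheCorresp}) is therefore nuclear, and $\bar x\in\mathrm{Im}(\calV_H\to\calV_G)\subseteq\calY_G$. For ``$\Rightarrow$'', if $\bar x\in\calY_G$ then, after replacing $x$ by a $W$-translate (which changes neither $\bar x$ nor the conjugacy class of $W_x$), we have $x\in U$ for some integral $U$ with $W^{(U)}$ not a pseudoreflection group; as $x\in U$, this $W^{(U)}$ is the pointwise stabiliser of $U$ inside $W_x$, so if $W_x$ acted on $V$ as a pseudoreflection group, Steinberg's fixed-point theorem (valid since $|W_x|$ is invertible in $k$; see \cite{PolyInv}) would make $W^{(U)}$ a pseudoreflection group, contradicting nuclearity of $H$. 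Hence $W_x$ does not act as a pseudoreflection group, i.e.\ $\bar x$ is singular. Specialising to $x=0$, where $W_0=W$, this also recovers $0\in\calY_G\iff H^*(BG,k)$ is not a polynomial ring, and shows that when $W$ acts as a pseudoreflection group every $W_x$ does, whence $\calY_G=\emptyset$ --- in line with Remark~\ref{OriginSubtlety}.

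I expect the genuine obstacle to be confined to Section~\ref{InvSection}: showing that the singular locus of $V/W$ is governed pointwise by the pseudoreflection property of the point stabilisers rests on the \'etale-local (Luna slice) structure of quotients by finite groups together with Chevalley-Shepherd-Todd. Granting that, the translation back to Lie groups above is largely bookkeeping, its one subtle ingredient being the integrality of the fixed subspaces $\ker(w-\mathrm{id})$ --- that they descend from the lattice $H_2(BT,\mathbb{Z})$, which is precisely where the hypothesis $p\nmid|W|$ enters --- since this is what lets a point with non-pseudoreflection stabiliser sit inside the support of an honest torus subgroup rather than of a merely formal linear subspace.
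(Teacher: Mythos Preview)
Your argument is correct and follows essentially the same architecture as the paper's: pass to $N_G(T)$, invoke the Section~\ref{InvSection} criterion that regularity at a point is governed by whether the stabiliser acts as a pseudoreflection group, and use the integrality of the fixed subspace $V^{W_x}$ to realise that stabiliser as $C_G(H)/T$ for an honest torus subgroup $H$ --- this last step is exactly the content of Lemma~\ref{TorusCover}, which you have effectively reproved. The one place you diverge is in the converse direction: the paper shows that a point in the support of a nuclear $K$ is singular by appealing to Lemma~\ref{SubgroupNuclear} (whose proof is a Cohen--Macaulay/freeness argument for towers of invariant rings), whereas you invoke Steinberg's fixed-point theorem to conclude that the pointwise stabiliser $W^{(U)}\leq W_x$ would inherit the pseudoreflection property. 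Both are standard non-modular invariant-theoretic facts and lead to the same conclusion, so this is a difference of citation rather than of strategy; if anything, your route via Steinberg is slightly more direct, since it stays entirely on the representation-theoretic side rather than passing through module-theoretic properties of the invariant rings.
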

\begin{rem}
	The two forms of the theorem are equivalent essentially by the observation of Remark \ref{OriginSubtlety}. $0$ is a nonsingular point of $\calV_G$ if and only if $H^*BG$ is a polynomial ring, in which case $\calP_G$ cannot have any singularities. Thus if $\calP_G$ has no singular points, the singular locus of $\calV_G$ is either $\emptyset$ or $\{0\}$ if, respectively, $H^*BG$ is polynomial or not, and the nucleus is $\emptyset$ or $\{0\}$ under the same condition.

	Hence, the proof of the theorem can safely ignore the origin point and considers only positive rank tori and the projective variety.
\end{rem}
The proof of this theorem is given at the end of Section \ref{InvSection}, and we instead consider an immediate consequence which should be compared to the c. 12 equivalent conditions of \cite{Benson1995} 1.4 for the nucleus of a finite group, when the nucleus is trivial in the sense of Remark \ref{OriginSubtlety}.
\begin{cor}
	$\calP_G$ is nonsingular if and only if for all circle subgroups $S$, the ring $H^*(BC_G(S),k)$ is polynomial.
\end{cor}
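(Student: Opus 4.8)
The plan is to chain together Theorem \ref{MainThm}, the classification of nuclear subgroups, Lemma \ref{SubgroupNuclear}, and Remark \ref{OriginSubtlety}; no new ingredients are needed, so the work is purely one of reorganising equivalences. By the projective form of Theorem \ref{MainThm}, $\calP_G$ is nonsingular if and only if $\calY_G\setminus\{0\}=\emptyset$, that is $\calY_G\subseteq\{0\}$, and the first step is to reinterpret this containment in terms of nuclear subgroups. If the trivial subgroup is the only nuclear subgroup of $G$, then $\calY_G$ is either empty or equal to $\mathrm{Im}(\calV_{\{e\}}\to\calV_G)=\{0\}$. Conversely, if some positive rank torus $H$ is nuclear, then its support $\mathrm{Im}(\calV_H\to\calV_G)$ has dimension $\operatorname{rank}H\geq 1$ --- the morphism $\calV_H\to\calV_G$ is finite, being the composite of the closed immersion $\calV_H\hookrightarrow\calV_T$ for a maximal torus $T\supseteq H$ with the finite quotient $\calV_T\to\calV_G$ by the Weyl group --- and in particular strictly contains $\{0\}$, so $\calY_G\not\subseteq\{0\}$. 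Hence $\calP_G$ is nonsingular if and only if no positive rank torus subgroup of $G$ is nuclear, which is exactly the ``trivial nucleus'' condition discussed in Remark \ref{OriginSubtlety}.

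Next I would cut this down from all positive rank tori to circle subgroups. A circle subgroup is in particular a positive rank torus, giving one implication at once. For the reverse, given a nuclear positive rank torus $H$ I would choose any circle $S\leq H$; as $S$ and $H$ lie in a common maximal torus, the ``in particular'' clause of Lemma \ref{SubgroupNuclear} (a torus subgroup of a nuclear subgroup is nuclear) makes $S$ nuclear. So no positive rank torus is nuclear exactly when no circle subgroup is nuclear. Finally, by the equivalence of conditions i) and ii) in the lemma characterising nuclear subgroups, a circle $S$ fails to be nuclear precisely when $H^*(BC_G(S),k)$ is a polynomial ring, and combining all of these equivalences yields the corollary.

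There is no genuine obstacle here; the only point requiring a little care is the bookkeeping at the origin $0\in\calV_G$, already isolated in Remark \ref{OriginSubtlety}. Whether $\calY_G$ comes out empty or equal to $\{0\}$ depends only on whether $H^*BG$ is a polynomial ring, but in either case $\calP_G$ has no singular points, so this distinction does not affect the statement.
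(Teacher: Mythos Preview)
Your proposal is correct and is exactly the unpacking the paper has in mind: the corollary is stated there as an immediate consequence of Theorem \ref{MainThm}, Lemma \ref{SubgroupNuclear}, and the characterisation of nuclear subgroups, with no separate proof given. Your chain of equivalences --- nonsingular $\calP_G$ $\Leftrightarrow$ trivial nucleus $\Leftrightarrow$ no nuclear positive rank torus $\Leftrightarrow$ no nuclear circle $\Leftrightarrow$ polynomial $H^*(BC_G(S),k)$ for every circle $S$ --- is precisely the intended route, and the care you take with the origin and with the finiteness of $\calV_H\to\calV_G$ is appropriate but not strictly necessary for the paper's level of detail.
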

\begin{exam}\label{SegreExample}
	Consider the Lie group $G=\T^2\rtimes C_2$ with $C_2$ acting via inversion. No nontrivial torus $H$ is fixed by $C_2$ pointwise, and thus $C_G(H)=\T^2$. Thus there are no nontrivial nuclear subgroups, however the trivial subgroup is nuclear.

	Thus $\calY_G=\{0\}$. Accordingly, $\calP_G$ is nonsingular: $H^*(BG,k)=k[x,y,z]/(xy-z^2)$ with generators of codegree $2$ whose projective variety is the image under Segre embedding $\mathbb{P}^1\times\mathbb{P}^1\hookrightarrow\mathbb{P}^3$.
\end{exam}
\begin{exam}
	Now take $G=\T^3\rtimes C_2$ with $C_2$ inverting the first two factors and fixing the third. Now $H^*(BG,k)=k[x,y,z,w]/(xy-z^2)$ with generators of codegree $2$.

	There is precisely one nuclear circle subgroup - the third factor in $\T^3$. Its corresponding point in $\calP_G$ is given with coordinates $[0:0:0:1]$. The singularity here is precisely the ordinary double point of $\Spec\,k[x,y,z]/(xy-z^2)$ at the origin.
\end{exam}

As may already be evident from the above, it seems prudent to choose a maximal torus $T\cong\T^n$ and consider $N_G(T)$. We know, from Borel (e.g. \cite{BorelFibre}) and Feshbach \cite{Feshbach} in positive characteristic, that $N_G(T)\to G$ induces an isomorphism on $H^*(-,k)$, giving the same group variety. We want to replace $G$ with $N_G(T)$ (that is, assume the identity component of $G$ is a torus), and the following result states we can do so without loss of generality:

\begin{lem}\label{NormaliserSame}
	Fix a maximal torus $T$ of $G$. The nucleus of $N_G(T)$ is identical to that of $G$.
\end{lem}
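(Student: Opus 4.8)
The plan is to show that a torus subgroup $H \leq T$ is nuclear as a subgroup of $G$ if and only if it is nuclear as a subgroup of $N := N_G(T)$, since then the two nuclei are unions of the supports $\mathrm{Im}(\calV_H \to \calV_G) = \mathrm{Im}(\calV_H \to \calV_N)$ over the same index set (recall $\calV_G = \calV_N$ via the Borel--Feshbach isomorphism, and this identification is compatible with the maps from $\calV_H$). So everything reduces to comparing the centralisers $C_G(H)$ and $C_N(H)$ through the lens of the pseudoreflection criterion (iii) of the Lemma characterising non-nuclear subgroups.

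First I would record the elementary group-theoretic fact that $C_N(H) = C_G(H) \cap N_G(T)$, and that $T$ is a maximal torus of $C_G(H)$ as well (since $H$ is central in $C_G(H)$ and $T \supseteq H$ is a maximal torus of $G$ containing it, standard compact Lie theory gives that $T$ is maximal in $C_G(H)$). Hence $N_{C_G(H)}(T) = C_G(H) \cap N_G(T) = C_N(H)$, i.e.\ $C_N(H)$ is precisely the normaliser of the maximal torus $T$ inside the compact Lie group $C_G(H)$. Both $C_G(H)/T$ and $C_N(H)/T$ are therefore subgroups of the Weyl group $W = N_G(T)/T$ acting on $H_2(BT,k)$, and the point is that they have the same invariants: applying Borel--Feshbach to the compact Lie group $C_G(H)$ with its maximal torus $T$, the inclusion $C_N(H) = N_{C_G(H)}(T) \hookrightarrow C_G(H)$ induces an isomorphism
\[
H^*(BC_G(H),k) \xrightarrow{\ \sim\ } H^*(BC_N(H),k),
\]
and under the identification $H^*(BC_G(H),k) = H^*(BT,k)^{C_G(H)/T}$ and likewise for $C_N(H)$, this says $H^*(BT,k)^{C_G(H)/T} = H^*(BT,k)^{C_N(H)/T}$ inside $H^*(BT,k)$. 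In particular one ring is polynomial if and only if the other is, so $H$ is nuclear in $G$ iff it is nuclear in $N$ by the equivalence (i)$\Leftrightarrow$(ii) of the Lemma.

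Finally I would assemble the conclusion: $\calY_N = \bigcup_H \mathrm{Im}(\calV_H \to \calV_N)$ over torus subgroups $H$ nuclear in $N$, which by the previous paragraph is the same index set as those nuclear in $G$; and the identification $\calV_N \cong \calV_G$ (induced by $N \hookrightarrow G$) carries $\mathrm{Im}(\calV_H \to \calV_N)$ to $\mathrm{Im}(\calV_H \to \calV_G)$ because the composite $\calV_H \to \calV_N \to \calV_G$ is the map induced by $H \hookrightarrow G$. Hence $\calY_N$ maps onto $\calY_G$, so the two are identical as homogeneous subvarieties of $\calV_G = \calV_N$. One should also note that every torus subgroup of $N$ is conjugate into $T$, and likewise for $G$, so restricting attention to torus subgroups of the fixed $T$ loses nothing on either side — I would state this reduction explicitly at the start.

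The main obstacle I anticipate is not conceptual but a matter of care: verifying that $T$ remains a maximal torus of $C_G(H)$ and that the Borel--Feshbach (and Feshbach, in positive characteristic coprime to $|W|$) isomorphism applies to $C_G(H)$ — which may be disconnected — with $N_{C_G(H)}(T)$ in the role of "$N_G(T)$". The cohomology computation $H^*(BC_G(H),k) = H^*(BT,k)^{C_G(H)/T}$ already presupposes exactly this, so it is really just a question of making the hypotheses on $C_G(H)$ (its Weyl group $C_G(H)/T$ has order dividing $|W|$, hence invertible in $k$) and citing the appropriate results consistently; no new technique is needed beyond what the excerpt already uses for $G$ itself.
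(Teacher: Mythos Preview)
Your proposal is correct and follows essentially the same route as the paper: identify $C_{N_G(T)}(H)$ with the normaliser $N_{C_G(H)}(T)$ of the maximal torus $T$ inside $C_G(H)$, invoke Borel--Feshbach (using that the relevant Weyl group has order dividing $|W|$, hence invertible in $k$) to get $H^*(BC_G(H),k)\cong H^*(BC_{N_G(T)}(H),k)$, and then handle arbitrary torus subgroups of $G$ by conjugating into $T$. The paper is terser on the points you flag as needing care (maximality of $T$ in $C_G(H)$, applicability of Feshbach to the possibly disconnected $C_G(H)$), but the argument is the same.
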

\begin{proof}
	Let $H$ be a torus subgroup of $N_G(T)$. Since $T$ is Abelian, $T$ is contained in both $C_{N_G(T)}(H)$ and $C_G(H)$. Indeed, $C_{N_G(T)}(H)$ is precisely the normaliser of $T$ in $C_G(H)$, and so $H^*(BC_{N_G(T)}(H),k)\cong H^*(BC_G(H),k)$ since the Weyl group has invertible order. Thus, $H$ is nuclear when considered as a subgroup of $N_G(T)$ if and only if it is nuclear as a subgroup of $G$, and so $\calY_{N_G(T)}\subseteq\calY_G$.
	%If $H$ is a torus subgroup of $N_G(T)$, then $C_G(H)$ is contained in $N_G(T)$ and thus is equal to $C_{N_G(T)}(H)$. That means $H$ is nuclear as a subgroup of $N_G(T)$ iff it is as a subgroup of $G$. Thus $\calY_{N_G(T)}$ is contained in $\calY_G$.

	If $H'$ is a torus subgroup of $G$, then $H'$ is contained in a maximal torus $T'$, which is conjugate to $T$. The inner automorphism taking $T'$ to $T$ induces an isomorphism of normalisers, and sends $H'$ to some torus subgroup $H$ of $T$. Since inner automorphisms are homotopic to the identity, the images of the inclusions of $\calV_H$ and $\calV_{H'}$ are precisely equal. Hence the containment is an equality.
\end{proof}

Hence we can make the standing assumption: $G$ shall have identity component $T=\T^n$ and finite component group $W$, which acts on $T$ via conjugation. In this situation, $W$ acts on $H^*BT$ and hence also on $\calV_T$, and the quotient of this variety under the action is $\calV_G$. For $\p\in\calV_T$, write $W_\p$ for the isotropy subgroup of $\p$ under this action.

We shall use the following lemma in the proof of Theorem \ref{MainThm}. This result is almost trivially true over the rationals, however to generalise to other fields takes a slight amount of awkward work. We thus make heavy use of Lemma \ref{TheCorresp}.

\begin{lem}\label{TorusCover}
	Every point $\p\in \calP_G$ lies in the image of $\calP_H(k)$ for some torus $H$, such that $C_G(H)/T$ is the isotropy subgroup $W_\p$.
\end{lem}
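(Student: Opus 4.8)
The plan is to translate the statement into the linear algebra of the $W$-action on $V:=H_2(BT,k)$ via Lemma \ref{TheCorresp}, and to realise the required torus as the one attached to the fixed subspace of the isotropy group of a lift of $\p$.

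First I would set up the reduction and record a short dictionary. By Lemma \ref{NormaliserSame} we may take $G$ to have identity component $T=\T^n$ and component group $W$ acting on $V$. Since $H^*BG=(H^*BT)^W\subseteq H^*BT$ is a finite ring extension, $\calV_T\to\calV_G$ is surjective and carries maximal homogeneous primes to maximal homogeneous primes, so $\p$ lifts to a point $\tilde\p\in\calP_T$; as $k$ is algebraically closed this is a line $\ell\subseteq V$, and $W_\p\leq W$ is its stabiliser. Two facts are then recorded. First, $\p$ lies in the image of $\calP_H\to\calP_G$ exactly when a $W$-translate of $\ell$ is contained in the subspace $H_2(BH,k)\subseteq V$; this is Lemma \ref{TheCorresp} applied to the inclusion $H\leq T$. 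Second, for a torus $H\leq T$ the subgroup $C_G(H)/T\leq W$ is precisely the pointwise stabiliser of $H_2(BH,k)$: since $T$ is abelian $\mathrm{Ad}(g)|_T$ depends only on the coset $gT$, and an element of $W$ of order invertible in $k$ acting trivially on $H_2(BH,k)$ must already act trivially on the integral lattice $H_2(BH,\mathbb{Z})$ and hence centralise $H$.

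For the construction itself, put $V_0:=V^{W_\p}$, the subspace fixed by the isotropy group. Since $|W_\p|$ divides $|W|$ it is invertible in $k$, so Maschke's theorem applies: the averaging idempotent $|W_\p|^{-1}\sum_{w\in W_\p}w$ projects $V$ onto $V_0$, and as this idempotent is defined over the prime subfield (where it acts on the lattice $L:=H_2(BT,\mathbb{Z})$, and $L^{W_\p}$ is a saturated sublattice) we obtain $V_0=L^{W_\p}\otimes k$, a subspace spanned by vectors with integral coordinates. Lemma \ref{TheCorresp} then produces a torus $H\leq T$ with $H_2(BH,k)=V_0$. Over $\mathbb{Q}$ this descent is immediate; the reason Lemma \ref{TheCorresp} is needed is precisely to carry it out over a general algebraically closed field, and this is the bulk of the ``awkward work''.

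It remains to verify the two required properties. By construction $W_\p$ fixes $H_2(BH,k)=V_0$ pointwise, so $W_\p\subseteq C_G(H)/T$ by the second dictionary fact; conversely any element of $W$ centralising $H$ fixes $V_0$, hence fixes $\ell\subseteq V_0$ and so stabilises it, placing it in $W_\p$; thus $C_G(H)/T=W_\p$. Since moreover $\ell\subseteq V_0=H_2(BH,k)$, the first dictionary fact gives $\p\in\mathrm{Im}(\calP_H\to\calP_G)$. The step I expect to be the main obstacle is the integral descent of $V_0$ in positive characteristic, handled above via Maschke and Lemma \ref{TheCorresp}; the secondary point needing care is the one used implicitly in the verification, namely that the isotropy $W_\p$ fixes the lifted line $\ell$ pointwise rather than merely setwise, so that $\ell$ genuinely lies in $V_0$ — this is automatic over $\mathbb{Q}$ and in general is where the order-invertibility hypothesis is used, by controlling the character through which $W_\p$ acts on $\ell$.
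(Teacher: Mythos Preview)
Your strategy is the paper's: lift $\p$ to a line $\ell$ in $V=H_2(BT,k)$, set $V_0=V^{W_\p}$, prove the integral descent $V_0=L^{W_\p}\otimes_{\mathbb Z} k$ with $L=H_2(BT,\mathbb Z)$, invoke Lemma~\ref{TheCorresp} to produce a torus $H$ with $H_2(BH,k)=V_0$, and then check $C_G(H)/T=W_\p$ and $\ell\subseteq V_0$. The paper carries out the descent by the explicit integral average $\bar x=N\sum_{w\in W_\p}w\cdot x$ with $N\in\mathbb Z$ inverting the group order modulo $p$; your Maschke idempotent is the same idea, but the parenthetical that it ``acts on the lattice $L$'' is wrong, since $|W_\p|^{-1}\notin\mathbb Z$. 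Phrase the projector over $\mathbb Z_{(p)}$ (where saturation of $L^{W_\p}$ gives what you want), or mimic the paper's integer lift.

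The genuine slip is in your final paragraph. It is \emph{not} automatic over $\mathbb Q$ that the setwise stabiliser of a line fixes it pointwise, and order--invertibility does not force the character $W_\p\to k^\times$ through which $W_\p$ acts on $\ell$ to be trivial. For instance take $G=\T\rtimes C_2$ with $C_2$ acting by inversion: then $V$ is one-dimensional, the unique line $\ell=V$ has setwise stabiliser $C_2$ but $V^{C_2}=0$, so $\ell\not\subseteq V_0$ and your construction yields the trivial torus; moreover no torus $H$ has $C_G(H)/T=C_2$, so under the setwise reading the lemma is actually false. The paper does not prove the two stabilisers agree either; it simply opens its proof by \emph{taking} ``$W_\p$ is equivalently the subgroup of $W$ that fixes $v$'', i.e.\ the pointwise stabiliser of $\ell$. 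With that convention $\ell\subseteq V_0$ is tautological and the rest of your argument goes through verbatim; you should adopt it explicitly rather than claim the two notions coincide.
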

\begin{proof}
	A point $\p\in\calP_G$ is equivalently a line in $H_2(BT,k)$, say spanned by vector $v$, and $W_\p$ is equivalently the subgroup of $W$ that fixes $v$. Let $V=H_2(BT,k)^{W_\p}$, which is a $W_\p$-subrepresentation that contains $v$. Since $W$ acts by integral matrices, $V$ is spanned by vectors with components in the integral subfield - however to avoid complications in positive characteristic we need slightly more:

	We claim that $H_2(BT,k)^{W_\p}=H_2(BT,\mathbb{Z})^{W_\p}\otimes_\mathbb{Z}k$. The former clearly contains the latter, and the former is spanned by vectors with integral components.

	If $x\in H_2(BT,\mathbb{Z})$ has image lying in $H_2(BT,k)^{W_\p}$, then for each $w\in W_\p$, $w\cdot x=x+py$ in $H_2(BT,\mathbb{Z})$ for some $y$, where $p$ is the characteristic of $k$. If $p=0$ there is nothing to prove so assume $p>0$, and let $N\in\mathbb{Z}$ be an inverse to $\left|W\right|$ modulo $p$. Then $$\bar{x}:=N\sum_{w\in W_\p}w\cdot x$$ lies in $H_2(BT,\mathbb{Z})^{W_\p}$, and has the same image as $x$ in $H_2(BT,k)$. This shows the required equality.

	Hence we can choose circle subgroups $C_1,\dots,C_r$ with classes $x_i\in H_2(BT,\mathbb{Z})^{W_\p}$ forming a basis of $H_2(BT,k)^{W_\p}$. Then $H=C_1\times\dots\times C_r$ is a torus subgroup with $H_2(BH,k)=H_2(BT,k)^{W_\p}$, which contains $v$. It thus remains to show $C_G(H)/T=W_\p$.

	By construction, any $w\in W_\p$ fixes each $C_i$ pointwise under conjugation (since their corresponding classes $H_2(BC_i,\mathbb{Z})$ are fixed) so $W_\p\subseteq C_G(H)/T$. But the reverse is true, if $w$ fixes $H$ pointwise it fixes $H_2(BH,k)$ and thus $v$.
\end{proof}

%We now move to the proof of Theorem \ref{MainThm}. In this context, we know that $H^*(BG,k)=H^*(BT,k)^W$. Hence $\calV_G$ is the quotient of $\calV_k(T)$ under the action of $W$ - for details on this see II.7 of \cite{AbelianVarieties}.
%
%We thus prove Theorem \ref{MainThm} under the sassumption of Theorem \ref{SingInv} as stated in the next section: in our case $V=H_2(BT,k)$ is a representation of the finite group $W$.
%
%\begin{proof}[Proof of Theorem \ref{MainThm}] For any $\p\in\calP_G$, Lemma \ref{TorusCover} tells us there exists a torus subgroup $H$ that $\p$ lies in the support of, and $C_G(H)/T=W_\p$. If the local ring at $\p$ is singular, then $W_\p$ does not act as a pseudoreflection group by Theorem \ref{SingInv}, which means $H$ is a nuclear subgroup. Thus all singularities lie in the nucleus.
%	
%	Conversely, if $\p$ lies in the support of a nuclear subgroup $K$, we can again choose $H$ such that $C_G(H)/T=W_\p$. Since $C_G(K)/T$ fixes $\p$, $C_G(K)/T$ is a subgroup of $W_\p$. That $H$ is itself nuclear then follows from Lemma \ref{SubgroupNuclear}, and then $p$ is a singular point by Theorem \ref{SingInv}.
%	
%\end{proof}

\section{Singularities in Invariant Rings}\label{InvSection}
\newcommand{\q}{\mathfrak{q}}
This section considers a more general commutative-algebraic setting of invariant rings, determining which of their prime ideals are regular. We use this to prove Theorem \ref{MainThm} at the end of the section. The main technical result is as follows:
\begin{prop}\label{MumfordButBetter}
	Let $A$ be a Noetherian commutative ring acted on by a finite group $G$, and $B=A^G$ the invariants of this ring under the action. Then for $\q\in\mathrm{Spec}\,B$, there is an isomorphism $$B^\wedge_\mathfrak{q}\cong (A_\p^\wedge)^{G_\p}$$
	where $\p\in\Spec\,A$ with $\p\cap B=\q$ and $G_\p$ denotes the subgroup of $G$ that maps $\p$ to itself.
\end{prop}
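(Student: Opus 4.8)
The plan is to run the classical computation (in the style of Mumford) of the completed local ring of a quotient, while keeping careful track of which primes of $A$ sit over $\q$ and of how $G$ permutes them; write $G_\p\leq G$ for the decomposition group of $\p$. First I would reduce to the case that $B$ is local. With $S=B\setminus\q$, localisation is exact and $B$-linear and $G$ fixes $B=A^G$ pointwise, so $(S^{-1}A)^G=S^{-1}(A^G)=B_\q$; replacing $A$ by $S^{-1}A$ and $B$ by $B_\q$ (both still Noetherian, and localisation changes neither $A^\wedge_\p$ nor $B^\wedge_\q$) we may assume $B$ is local with maximal ideal $\q$. Since each $a\in A$ is a root of $\prod_{g\in G}(X-ga)\in B[X]$, the ring $A$ is integral over $B$; a standard prime-avoidance argument then shows that $G$ permutes the finite set of primes of $A$ lying over $\q$, say $\p=\p_1,\dots,\p_n$, transitively, that these are exactly the maximal ideals of $A$, and that $n=[G:G_\p]$.

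Next I would pass to $\q$-adic completions. As $A$ is Noetherian and $\sqrt{\q A}=\p_1\cap\dots\cap\p_n=\mathrm{Jac}(A)$, some power of $\mathrm{Jac}(A)$ lies in $\q A$, so the $\q A$-adic and $\mathrm{Jac}(A)$-adic topologies on $A$ agree and $\widehat{A}:=\varprojlim_m A/\q^m A\cong\prod_{i=1}^{n}A^\wedge_{\p_i}$. The action of $G$ on $\widehat{A}$ permutes these factors transitively, with $G_\p$ the stabiliser of the factor $A^\wedge_\p$; hence, by the elementary identification of the invariants of a transitive permutation action of $G$ with the invariants of a point stabiliser, projection to that factor induces an isomorphism $\widehat{A}^{\,G}\xrightarrow{\ \sim\ }(A^\wedge_\p)^{G_\p}$.

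It remains to identify $\widehat{A}^{\,G}$ with $B^\wedge_\q$, and for this I would use that $A$ is a finitely generated $B$-module. (This is the point at which the hypotheses must do some work; it is automatic, by Noether's finiteness theorem, when $A$ is a finitely generated algebra over a Noetherian ring — which is the only case needed later, with $A=H^*BT$ and $B=H^*BT^{W}$.) Then $\widehat{A}=A\otimes_B B^\wedge_\q$; since $B^\wedge_\q$ is flat over $B$ and the $G$-invariants functor on $B$-modules with $G$-action is the kernel of the $B$-linear map $a\mapsto(ga-a)_{g\in G}$, flat base change gives $\widehat{A}^{\,G}=(A\otimes_B B^\wedge_\q)^G=A^G\otimes_B B^\wedge_\q=B\otimes_B B^\wedge_\q=B^\wedge_\q$. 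Composing with the isomorphism of the previous paragraph yields $B^\wedge_\q\cong(A^\wedge_\p)^{G_\p}$, as required.

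The step I expect to be the crux is this last identification. Because $|G|$ need not be invertible in $A$ there is no Reynolds operator, so one cannot simply average to see that invariants commute with completion; instead the argument must run through flatness of $B\to B^\wedge_\q$ together with the identification $\widehat{A}=A\otimes_B B^\wedge_\q$, which is exactly what forces finiteness of $A$ over $B$ (equivalently, finiteness of $\Spec A\to\Spec B$) into the proof. Everything else — transitivity of $G$ on the fibre over $\q$ and the bookkeeping that isolates the single factor $A^\wedge_\p$ with its $G_\p$-action — is routine, but is really where the geometric content of the statement lives.
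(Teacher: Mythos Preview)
Your proposal is correct and follows essentially the same route as the paper: localise $A$ at $B\setminus\q$ to make $B$ local, identify the $\q$-adic completion of $A$ with the product $\prod_i A^\wedge_{\p_i}$ via the Chinese remainder theorem (after checking the $\q A$-adic and Jacobson-radical topologies agree), read off $(A^\wedge_\p)^{G_\p}$ as the $G$-invariants of this product using transitivity on the fibre, and finally commute $G$-invariants with completion using that $A$ is a finitely generated $B$-module. The paper phrases the last step as ``completion is exact on finitely generated modules'' where you spell it out via flatness of $B^\wedge_\q$, but these are the same argument; you are if anything more careful than the paper in flagging that module-finiteness of $A$ over $B$ is not literally forced by the stated hypothesis ``$A$ Noetherian'' and must be supplied (as it is, via Noether, in the application to $H^*BT$).
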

\begin{rem}
	This is by no means an original result - it can be found for example as the first theorem in Ch. X in \cite{Raynaud}.
\end{rem}
\begin{proof}
	Consider first the localisation $A_\mathfrak{q}$, of $A$ as a (finitely generated) $B$-module, at the prime $\mathfrak{q}$. This is a ring whose primes are in bijection with those of $A$ that are disjoint from $B\setminus\mathfrak{q}$. Thus, one can verify it is a semi-local ring whose maximal ideals are localisations of those in the set $$P=\{\p\in\Spec\,A:\p\cap B=\q\}$$
	of those primes over $\q$. Thus, once we complete $A_\mathfrak{q}$ as a ring with respect to its Jacobson radical $J$, the Chinese remainder theorem (see \cite{Matsumura} 8.15) gives us an isomorphism
	$$(A_\q)^\wedge_J\cong\prod_{\p\in P}A_\p^\wedge$$
	We now claim the $J$-adic topology on $A_\q$ is the same as the $\q$-adic, so that we have $(A_\q)^\wedge_J\cong A_\q^\wedge$. This follows since the Jacobson radical $J$ is the radical $\sqrt{I}$ of the ideal $I=A\cdot\q$, and $I^nA=\q^nA$.

	Thus we have an isomorphism $A_\q^\wedge\cong\prod_{\p\in P}A_\p^\wedge$. Choose some $\p\in P$, and notice $P$ is its orbit under action of $G$. Choices of coset representatives $g\in G/G_\p$ define isomorphisms $A_\p^\wedge\to A_{g.\p}^\wedge$ and thus an isomorphism $$(A_\q^\wedge\cong)\prod_{\p\in P}A_\p^\wedge\cong \prod_{g\in G/G_\p}A_\p^\wedge$$
	Under this choice of isomorphism, the coset represetatives of $G_\p$ permute the factors of the right hand side, and elements of $G_\p$ act diagonally on the product.

	Thus the diagonal embedding $(A_\p^\wedge)^{G_\p}\to\prod_{g\in G/G_\p}A_\p^\wedge$ defines an isomorphism $(A_\p^\wedge)^{G_\p}\cong (A_\q^\wedge)^G$. The result then follows as completion is exact on finitely generated modules and so $(A_\q^\wedge)^G\cong(A^G)_\q^\wedge=B_\q^\wedge$.
\end{proof}

The object now is to consider polynomial invariant rings. Let $k$ be an algebraically closed field, $G$ a finite group whose order is invertible in $k$. Let $V$ be a finite $kG$-module and $S=k[V]$ be the free graded commutative algebra generated by $\Hom_k(\Sigma^2V,k)$ - that is, a graded polynomial ring with generators in degree $-2$.

\begin{thm}\label{SingInv} Let $\p$ be a point of $\mathrm{Proj}\,S$ (corresponding to a line in $V$) and $\q=\p\cap S$. The homogeneous localisation $(S^G)_{(\q)}$ is regular if and only if the subgroup $G_\p\subseteq G$ that fixes $\p$ acts as a reflection group on $V$.
\end{thm}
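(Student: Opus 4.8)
The plan is to reduce the question to a completed local ring, apply Proposition \ref{MumfordButBetter}, and then invoke a complete-local form of the Chevalley--Shephard--Todd theorem, translating the resulting condition on a tangent space back to the action on $V$. Concretely, regularity of the Noetherian local ring $(S^G)_{(\q)}$ is unchanged by completion, and since $S^G\hookrightarrow S$ is a finite extension of graded rings we have $\mathrm{Proj}\,S^G=(\mathrm{Proj}\,S)/G=\mathbb P(V)/G$, so $(S^G)_{(\q)}$ is the local ring of $\mathbb P(V)/G$ at the image of the point $[\ell]$ corresponding to $\p$. Writing $G_\p\le G$ for the stabiliser of $[\ell]$, Proposition \ref{MumfordButBetter} --- applied to the affine cones and localised back to the relevant point --- should yield $(S^G)^{\wedge}_{(\q)}\cong(S^{\wedge}_{(\p)})^{G_\p}$, where $S^{\wedge}_{(\p)}$ is the completed local ring of $\mathbb P(V)$ at $[\ell]$; as $\mathbb P(V)$ is smooth this is a complete regular local $k$-algebra acted on by the finite group $G_\p$, whose order divides $|G|$ and is therefore invertible in $k$.

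Next, because $|G_\p|$ is invertible, this action can be linearised: a $G_\p$-equivariant section of $\m\twoheadrightarrow\m/\m^2$ gives an isomorphism $S^{\wedge}_{(\p)}\cong k[[\,\m/\m^2\,]]$ of $G_\p$-algebras, where $\m/\m^2\cong T^*_{[\ell]}\mathbb P(V)\cong\ell\otimes(V/\ell)^*$. The complete-local Chevalley--Shephard--Todd theorem (again valid as $|G_\p|$ is invertible) then says that $k[[\,\m/\m^2\,]]^{G_\p}$ is regular exactly when the image of $G_\p$ in $\mathrm{GL}(\m/\m^2)$ is generated by pseudoreflections. At this stage the statement has been reduced to: $(S^G)_{(\q)}$ is regular if and only if $G_\p$ acts on the cotangent space $T^*_{[\ell]}\mathbb P(V)$ as a pseudoreflection group.

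It remains to translate this into the action of $G_\p$ on $V$. As $G_\p$ stabilises $\ell$ and $|G_\p|$ is invertible there is a $kG_\p$-splitting $V\cong\ell\oplus(V/\ell)$, and one compares which elements act as pseudoreflections on $T^*_{[\ell]}\mathbb P(V)=\ell\otimes(V/\ell)^*$ with which act as pseudoreflections on $V$. In the relevant case --- where $G_\p$ is the pointwise stabiliser $\mathrm{Stab}_G(v)$ of a vector $v$ spanning $\ell$, as produced by Lemma \ref{TorusCover} --- $G_\p$ acts trivially on $\ell$, so $\ell$ is a trivial summand and $T^*_{[\ell]}\mathbb P(V)\cong(V/\ell)^*$ as a $kG_\p$-module. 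Since adjoining or removing a trivial summand alters neither the set of pseudoreflections nor the property of being generated by them, $G_\p$ is a pseudoreflection group on $V$ iff on $V/\ell$ iff on $T^*_{[\ell]}\mathbb P(V)$, which by the previous paragraph is equivalent to regularity of $(S^G)_{(\q)}$.

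I expect two points to need genuine care. The first is the completion isomorphism for the \emph{homogeneous} localisation: $S^G$ need not be generated in a single degree, so the affine cone over $\mathrm{Proj}\,S^G$ can be strictly more singular than $\mathrm{Proj}\,S^G$ itself, and one must work with the quotient $\mathbb P(V)/G$ directly, quotienting out the residual torus action, rather than cite Proposition \ref{MumfordButBetter} verbatim. The second, and the real heart of the argument, is the representation-theoretic translation of the last paragraph: one has to track the character of $G_\p$ on $\ell$ and be precise about the distinction between the line-stabiliser and the pointwise stabiliser of $[\ell]$ --- this is exactly where the hypothesis specifying $G_\p$ is used.
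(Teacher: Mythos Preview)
Your strategy is the same as the paper's --- reduce to a completion, apply Proposition \ref{MumfordButBetter}, then invoke Chevalley--Shephard--Todd --- and the representation-theoretic translation at the end is exactly what the paper does. The two differences worth noting are the following.

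\medskip

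\textbf{Your first worry is unnecessary.} The paper sidesteps the homogeneous-localisation issue entirely by passing not to the completed local ring of $\mathbb P(V)/G$ at $[\ell]$, but to the \emph{non-homogeneous} completion $(S^G)_\q^\wedge$ of the affine cone at the (height $n-1$, non-maximal) prime $\q$. Regularity of $(S^G)_{(\q)}$ is equivalent to regularity of $(S^G)_\q^\wedge$ by standard graded commutative algebra, and Proposition \ref{MumfordButBetter} applies verbatim to $\p$ and $\q$ as primes of $S$ and $S^G$. Choosing coordinates so that $\p=(x_2,\dots,x_n)$, one gets $S_\p^\wedge\cong K[[V']]$ with $K=k(x_1)$ the residue field and $V'=V/\ell$; Chevalley--Shephard--Todd over $K$ then finishes. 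There is no need to descend to $\mathbb P(V)$, quotient by a torus, or identify the cotangent space as $\ell\otimes(V/\ell)^*$: the extra $\mathbb G_m$-direction is absorbed harmlessly into the residue field.

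\medskip

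\textbf{Your second worry is real, and the paper glosses over it.} The paper simply asserts that ``as a $G_\p$-representation, $V$ splits as $\langle e_1\rangle\oplus V'$, acting trivially on the first factor''. As you note, this requires $G_\p$ to be the \emph{pointwise} stabiliser of the line, not merely the setwise stabiliser of the ideal $\p$; otherwise a nontrivial character on $\ell$ twists the identification and the equivalence ``pseudoreflection group on $V$ $\Leftrightarrow$ pseudoreflection group on $V/\ell$'' can fail (e.g.\ $G=\{\pm1\}$ acting by $-1$ on $k^2$). For the application to Theorem \ref{MainThm} this is harmless, since Lemma \ref{TorusCover} supplies $W_\p$ as the pointwise stabiliser of a vector, but you are right to flag that the statement of Theorem \ref{SingInv} should be read with that interpretation of $G_\p$.
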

\begin{proof}
	$(S^G)_{(\q)}$ is regular if and only if the non-homogeneous completion $(S^G)_\q^\wedge$ is. By Proposition \ref{MumfordButBetter} $$(S^G)_\q^\wedge\cong (S_\p^\wedge)^{G_\p}$$
	Choose a basis $e_1,\dots,e_n$ of $V$ so that $\p$ corresponds to the line spanned by $e_1$, so that $S\cong k[x_1,\dots,x_n]$ and $\p=(x_2,\dots,x_n)$.

	As a $G_\p$-representation, $V$ splits as $\langle e_1\rangle\oplus V'$, acting trivially on the first factor. Then $S_\p^\wedge\cong K[[V']]$ for some field extension $K$ of $k$, with the action of $G_\p$ induced by that on $V'$. $G_\p$ acts as a reflection group on $V$ if and only if it does on $V'$, and $K[[V']]^{G_\p}$ is regular if and only if $K[V']^{G_\p}$ is polynomial, so the result follows from the Chevalley-Shepherd-Todd theorem.
\end{proof}

We use this to prove Theorem \ref{MainThm}. For this result, our finite group is now $W$, and $V=H_2(BT)$.

\begin{proof}[Proof of Theorem \ref{MainThm}] For any $\p\in\calP_G$, Lemma \ref{TorusCover} tells us there exists a torus subgroup $H$ that $\p$ lies in the support of, and $C_G(H)/T=W_\p$. If the local ring at $\p$ is singular, then $W_\p$ does not act as a pseudoreflection group by Theorem \ref{SingInv}, which means $H$ is a nuclear subgroup. Thus all singularities lie in the nucleus.

	Conversely, if $\p$ lies in the support of a nuclear subgroup $K$, we can again choose $H$ such that $C_G(H)/T=W_\p$. Since $C_G(K)/T$ fixes $\p$, $C_G(K)/T$ is a subgroup of $W_\p$. That $H$ is itself nuclear then follows from Lemma \ref{SubgroupNuclear}, and then $p$ is a singular point by Theorem \ref{SingInv}.
\end{proof}
\section{Support of Singularity Categories}\label{SupportSection} In this section we make a definition of the support of a singularity category. As will be discussed in Remark \ref{42Rem} this notion of singular support has been used before to classify thick subcategories for hypersurface rings, however to the author's knowledge has not been applied to ring spectra.

The motivation for this comes from the first version of the preprint \cite{benson2023modules}, in which Conjecture 7.5 states that the nucleus of a finite group $G$ is the same as the support of the singularity category $\Dsg(C^*(BG))$. This category indeed admits a central action by the cohomology ring $H^*(BG)$, but lacks arbitrary coproducts, so one cannot localise with respect to a prime within the category itself. Thus one cannot directly apply the notion of support of Benson, Iyengar, and Krause \cite{BIKSupport}, and must consider a method regarding $\Dsg(R)$ as a subquotient of $D(R)$. It should be noted that this issue is circumvented in work of Stevenson \cite{SingSubCatsGreg} with discrete rings by considering a larger category $S(R)=K_\text{ac}(\mathrm{Inj}\,R)$ with all sums, whose subcategory of compact objects is $\Dsg(R)$, up to summands.
\subsection{A Singular Theory of Support}
Let $R$ be a commutative ring spectrum augmented over a field $k$, such that $R_*$ is a Noetherian ring. Write $R\subP$ for the homotopical localisation at a homogeneous prime $\p$ of $R_*$, recalling $(R\subP)_*=(R_*)\subP$, the homogeneous localisation of the graded ring $R_*$.

\begin{rem}\label{fgExplanation}
	The requirement of augmentation over $k$ is only to give a well-defined notion of a singularity category, as in \cite{MoritaAndSing}. For formal DGAs, ``finitely-generated'' modules should probably just be in the coefficient case. Indeed we remove the augmentation requirement and when $R$ is formal define the singularity category as the Verdier quotient of the category of coefficient-finitely generated modules by those small ones. In the case of nucleus, coconnected augmented algebras are coefficient-normalisable by Noether's normalisation lemma, and so is consistent with how it is defined in \cite{MoritaAndSing}.
\end{rem}

\begin{defin}
	Let $M$ be an $R$-module, finitely generated in the sense explained in Remark \ref{fgExplanation}. Define the \textbf{singular support} of $M$, $\mathrm{Supp}_\text{sg}(M)$, to be the set of homogeneous primes $\p$ of $R_*$ for which $M\otimes_RR\subP$ is not a small object of $\calD(R\subP)$.

	If two modules are isomorphic in the singularity category, then their singular supports are equal, since $-\otimes_R R\subP$ sends small objects of $\calD(R)$ to small objects of $\calD(R\subP)$.
\end{defin}

\begin{defin}
	The \textbf{support} of the singularity category $\Dsg(R)$ is the union $$\mathrm{Supp}\,\Dsg(R)=\bigcup_{M\in\Dsg(R)}\mathrm{Supp}_\text{sg}(M)\subseteq\Spec^h(R_*)$$
\end{defin}

We want to understand this when $R$ is a formal DGA over $k$. To start first, we consider the case that $R$ is a discrete ring.

\begin{prop}\label{discreteSupport}
	Let $R$ be a discrete Noetherian ring. The support of the singularity category of $R$ is the singular locus $\mathrm{Sing}(R)$, the set of primes $\p$ for which $R_\p$ is not regular.
\end{prop}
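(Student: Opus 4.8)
The plan is to establish the two inclusions $\mathrm{Supp}\,\Dsg(R)\subseteq\mathrm{Sing}(R)$ and $\mathrm{Sing}(R)\subseteq\mathrm{Supp}\,\Dsg(R)$ separately, using standard commutative algebra facts about when $\calD^b(R_\p)=\calD^c(R_\p)$.

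For the easier inclusion, suppose $\p\notin\mathrm{Sing}(R)$, so $R_\p$ is a regular local ring. By the Auslander--Buchsbaum--Serre theorem, every finitely generated $R_\p$-module has finite projective dimension, so $\calD^b(R_\p)=\calD^c(R_\p)$; equivalently $\Dsg(R_\p)=0$. Now for any $M\in\calD^b(R)$, the localisation $M\otimes_R R_\p$ lies in $\calD^b(R_\p)=\calD^c(R_\p)$, hence is small in $\calD(R_\p)$. Thus $\p\notin\mathrm{Supp}_\text{sg}(M)$ for all $M$, so $\p\notin\mathrm{Supp}\,\Dsg(R)$. This direction is essentially formal once ABS is invoked.

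For the reverse inclusion, suppose $\p\in\mathrm{Sing}(R)$. I want a single finitely generated $R$-module $M$ whose localisation at $\p$ is not perfect. The natural candidate is $M=R/\p$ (or its homotopy-category image). Since $R_\p$ is not regular, its residue field $\kappa(\p)=R_\p/\p R_\p$ has infinite projective dimension over $R_\p$ --- again by Auslander--Buchsbaum--Serre --- so $\kappa(\p)$ is not a small object of $\calD(R_\p)$. Because $(R/\p)\otimes_R R_\p = R_\p/\p R_\p = \kappa(\p)$, we conclude $\p\in\mathrm{Supp}_\text{sg}(R/\p)$, and since $R/\p\in\calD^b(R)$ represents a nonzero object of $\Dsg(R)$ (its localisation is nonzero in $\Dsg(R_\p)$, hence it is not itself perfect over $R$), we get $\p\in\mathrm{Supp}\,\Dsg(R)$.

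The main subtlety --- not really an obstacle but the point requiring the most care --- is the interaction between the homogeneous/graded setting of the definitions and the classical affine statements: the singular support is defined via homogeneous primes and homotopical localisation $R\subP$ with $(R\subP)_*=(R_*)\subP$, so I should confirm that for a discrete graded Noetherian ring the graded-local ring $R\subP$ is regular iff it is regular as an ungraded local ring, and that the graded analogue of Auslander--Buchsbaum--Serre holds (it does; one may use that $R\subP$ is regular iff its ungraded localisation at the relevant maximal ideal is, or cite the graded version directly). One should also note that $-\otimes_R R\subP$ on the derived category agrees with ordinary localisation of complexes, which is immediate for a discrete ring. With these identifications in place the argument above goes through verbatim, and in fact this proposition is the template for the formal-DGA case treated next, where $R_*$ plays the role of the discrete ring.
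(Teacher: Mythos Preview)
Your proof is correct and follows essentially the same route as the paper's: regularity of $R_\p$ gives $\Dsg(R_\p)=0$ via Auslander--Buchsbaum--Serre, while for singular $\p$ the witness $R/\p$ localises to the residue field $\kappa(\p)$, which is finitely generated but not small. Your third paragraph is overcautious for this particular proposition---since $R$ is discrete, every prime is homogeneous and $R\subP=R_\p$ on the nose---but your awareness that these identifications need checking in the graded/formal setting is exactly what the paper exploits in the subsequent theorem.
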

\begin{proof}
	If $R_\p$ is regular, then $\Dsg(R_\p)=0$ and so $\p$ cannot lie in any singular support.
	If $R_\p$ is not regular, then the residue field $k_\p$ is finitely generated but not small. Then $\p\in\mathrm{Supp}_\text{sg}(R/\p)$ since $(R/\p)\otimes_R R_\p=k_\p$.
\end{proof}

We shall now extend this to when $R$ is a formal DGA. In fact, it turns out the proof is identical: we just need to find a ``witness'' to the singularity of $R\subP$. That is, if $\Dsg(R\subP)\neq0$ then some nontrivial element lies in the image of $-\otimes_RR\subP$.

We first note the following, without proof. This has been used implicitly in \cite{ThickSubcats}:

\begin{prop} If $R$ is a formal DGA then for any $\p\in\Spec^h(R)$, the DGA $R\subP$ is also formal.
\end{prop}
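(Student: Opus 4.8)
The plan is to transport formality through the localization functor. Recall that $R$ being formal means there is a zig-zag of quasi-isomorphisms of DGAs connecting $R$ to its homology $H_*(R)$, regarded as a DGA with zero differential. The homotopical localization $R\subP$ inverts the homogeneous elements of $R_*$ lying outside $\p$: concretely, for such an element $x$ one forms the telescope $R[1/x]=\mathrm{hocolim}(R\xrightarrow{x}R\xrightarrow{x}\cdots)$, which carries the structure of a commutative $R$-algebra, and $R\subP$ is the filtered homotopy colimit of these over the multiplicative set $S$ of all homogeneous $x\notin\p$. In particular $(R\subP)_*\cong(R_*)\subP$, the algebraic homogeneous localization.

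First I would verify that $(-)\subP$ takes quasi-isomorphisms of DGAs to quasi-isomorphisms. This is immediate on homotopy groups: if $f\colon A\to B$ is a DGA quasi-isomorphism, then $H_*(f\subP)$ is identified with $(H_*f)\subP$, the algebraic localization of an isomorphism, hence an isomorphism. Applying $(-)\subP$ to the formality zig-zag of $R$ therefore produces a zig-zag of quasi-isomorphisms $R\subP\simeq(H_*R)\subP$. Second, I would note that $(H_*R)\subP$ is again a DGA with zero differential: the telescope of a diagram of zero-differential complexes $H_*R\xrightarrow{x}H_*R\to\cdots$ is computed by the ordinary filtered colimit, which is the graded ring $(H_*R)[1/x]$ with zero differential, and passing to the colimit over $S$ leaves the differential zero. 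Hence $(H_*R)\subP$ is trivially formal, being equal to its own homology, and since $H_*(R\subP)\cong(H_*R)\subP$ the zig-zag above exhibits $R\subP$ as quasi-isomorphic to $H_*(R\subP)$; that is, $R\subP$ is formal.

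The only delicate point is keeping track of the flavour of multiplicative structure that ``formal'' refers to --- strict DGA, $A_\infty$, or $E_\infty$. In the non-modular setting relevant to this paper these notions coincide, and inverting a multiplicative set of homotopy classes is a symmetric-monoidal, homotopically well-behaved operation in each of them, so the argument goes through verbatim; I expect this bookkeeping, rather than any conceptual obstacle, to be the main thing to pin down.
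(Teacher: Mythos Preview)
The paper states this proposition explicitly \emph{without proof}, merely remarking that it has been used implicitly in \cite{ThickSubcats}, so there is no argument in the paper to compare against. Your strategy is the natural one and is correct: localization at a multiplicative set of homotopy classes can be made functorial on commutative DGAs (or $E_\infty$-algebras), it preserves quasi-isomorphisms because on homology it is ordinary algebraic localization, and it takes zero-differential DGAs to zero-differential DGAs; applying it termwise to a formality zig-zag for $R$ (transporting $\p$ along the induced homology isomorphisms at each intermediate stage) yields a formality zig-zag for $R\subP$. You are right that the only genuine content is making the multiplicative structure on the localization precise --- one clean way around the telescope bookkeeping is to observe that a formality zig-zag induces a monoidal equivalence between homotopy categories of $R$-algebras and $H_*R$-algebras, under which $R\subP$ and $(H_*R)\subP$ correspond by their shared universal property --- but in the commutative, non-modular setting of the paper this is indeed routine.
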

Since $R\subP$ is formal, we can therefore use Corollary \ref{LocalSmallCor}, tying the vanishing of the singularity category of formal DGAs to regularity in the commutative-algebraic sense.
\begin{thm}
	Let $R$ be a Noetherian commutative graded ring, considered as a DG algebra with zero differential. The singular support of $R$ is the homogeneous singular locus $\mathrm{Sing}^h(R)=\mathrm{Sing}(R)\cap\Spec^h(R)$.
\end{thm}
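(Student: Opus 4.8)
The statement means precisely that $\mathrm{Supp}\,\Dsg(R)=\mathrm{Sing}^h(R)$, and the plan is to run the argument of Proposition \ref{discreteSupport} in the graded setting, noting that the localisations $R\subP$ are still discrete (hence trivially formal) rings with $(R\subP)_*=(R_*)\subP$. Two preliminary bookkeeping points: first, homogeneous localisation of a discrete ring is flat, so for any coefficient-finitely-generated $R$-module $M$ the module $M\otimes_R R\subP$ has coefficients $(M_*)\subP$, which is finitely generated over $(R\subP)_*$; in particular $M\otimes_R R\subP\in\calD^b(R\subP)$. Second, for a homogeneous prime $\p$ the local ring $R_\p$ is the localisation of the graded-local ring $(R_*)\subP$ at its unique graded maximal ideal, so $R_\p$ is regular if and only if $(R_*)\subP$ is; hence $\mathrm{Sing}^h(R)=\{\p\in\Spec^h(R):(R_*)\subP\text{ is not regular}\}$. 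It therefore suffices to show that $\p$ lies in the singular support of some coefficient-finitely-generated module exactly when $(R_*)\subP$ fails to be regular.

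If $\p\notin\mathrm{Sing}^h(R)$, then $(R\subP)_*=(R_*)\subP$ is regular, so by formality of $R\subP$ and Corollary \ref{LocalSmallCor} we have $\Dsg(R\subP)=0$; equivalently, every object of $\calD^b(R\subP)$ is small. By the first bookkeeping point, $M\otimes_R R\subP$ lies in $\calD^b(R\subP)$ for every coefficient-finitely-generated $M$, hence is small, so $\p\notin\mathrm{Supp}_\text{sg}(M)$. Taking the union over all such $M$ gives $\p\notin\mathrm{Supp}\,\Dsg(R)$.

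Conversely, if $(R_*)\subP$ is not regular we produce a witness exactly as in Proposition \ref{discreteSupport}: take $M=R/\p$, which is cyclic and hence coefficient-finitely generated. Then $M\otimes_R R\subP$ has coefficients the graded residue field $(R_*)\subP/\p(R_*)\subP$, which --- since $(R_*)\subP$ is Noetherian, graded-local and not regular --- has infinite projective dimension and so is not small in $\calD(R\subP)$. Therefore $\p\in\mathrm{Supp}_\text{sg}(R/\p)\subseteq\mathrm{Supp}\,\Dsg(R)$, which completes the proof.

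Since the substantive input --- formality of homogeneous localisations and the equivalence of regularity with vanishing of the singularity category for formal DGAs (Corollary \ref{LocalSmallCor}) --- is already available, there is no serious obstacle here; the argument is a direct graded transcription of Proposition \ref{discreteSupport}. The only steps that need a little care are the two bookkeeping points above: reducing regularity of $R_\p$ to that of the graded-local ring $(R_*)\subP$ (so that the two descriptions of $\mathrm{Sing}^h$ agree), and checking that homogeneous localisation preserves coefficient-finite generation, so that both Corollary \ref{LocalSmallCor} and the definition of $\Dsg(R\subP)$ genuinely apply to the modules $M\otimes_R R\subP$.
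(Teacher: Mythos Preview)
Your proof is correct and follows essentially the same approach as the paper: use Corollary \ref{LocalSmallCor} for the regular direction and the witness $R/\p$ (whose homogeneous localisation is the graded residue field) for the singular direction. Your added bookkeeping---flatness of homogeneous localisation and the equivalence of regularity for $R_\p$ and $(R_*)\subP$---makes explicit what the paper leaves implicit, but the argument is otherwise identical.
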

\begin{proof}
	If $R\subP$ is regular then for $M$ a (coefficient-) finitely generated DG module, $M\otimes_R R\subP$ is finitely generated over $R\subP$, also formal, and thus by Corollary \ref{LocalSmallCor} is small, thus $\p$ is not in the singular support of any module.

	If $R\subP$ is not regular, then consider $R/\p$ as a DG $R$-module with zero differentials. $R/\p\otimes_RR\subP$ is then the graded residue field $\kappa(\p)$. If this is a small $R\subP$-module, then $R\subP$ would be regular. Thus $\p\in\mathrm{Supp}_\text{sg}(R/\p)$.
\end{proof}
\begin{cor}
	The nucleus of a compact Lie group, over a field in which the order of the Weyl group is invertible, is precisely the support of the singularity category of its cochains.
\end{cor}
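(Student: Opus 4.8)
The plan is to identify both the nucleus $\calY_G$ and the support of $\Dsg(C^*(BG,k))$ with the homogeneous singular locus of $H^*(BG,k)$; every ingredient is already available, so the corollary is an assembly of the preceding results. First I would invoke formality: the standing hypothesis that the order of the Weyl group is invertible in $k$ makes $C^*(BG,k)$ a formal DG algebra (\ref{BGFormal}), so there is an equivalence of ring spectra $C^*(BG,k)\simeq H^*(BG,k)$, the target carrying the zero differential. An equivalence of ring spectra induces an equivalence of derived categories carrying small objects to small objects and coefficient-finitely-generated modules to coefficient-finitely-generated modules, and is compatible with the central action of $H^*(BG,k)$; hence it induces an equivalence of singularity categories, and therefore $\mathrm{Supp}\,\Dsg(C^*(BG,k)) = \mathrm{Supp}\,\Dsg(H^*(BG,k))$. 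Since $G$ is compact Lie, $R = H^*(BG,k)$ is a Noetherian graded (strictly commutative) ring, so the preceding theorem applies and gives $\mathrm{Supp}\,\Dsg(H^*(BG,k)) = \mathrm{Sing}^h(R)$.

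Next I would match $\mathrm{Sing}^h(H^*(BG,k))$ with $\calY_G$ as subsets of $\Spec^h(H^*(BG,k))$. Theorem~\ref{MainThm} does precisely this on points: the points of $\calY_G$ are exactly the singular points of $\calV_G$, i.e.\ exactly those lying in $\mathrm{Sing}^h(H^*(BG,k))$. To promote this to an equality of subsets I would note that both sides are Zariski-closed in $\calV_G$ --- the singular locus because finitely generated $k$-algebras are excellent, and the nucleus because (using Lemma~\ref{NormaliserSame} to reduce to the case that the identity component of $G$ is a maximal torus $T$ with component group $W$) it is a finite union of the closed subvarieties $\mathrm{Im}(\calV_H\to\calV_G)$: each such image is the image of the closed immersion $\calV_H\hookrightarrow\calV_T$ under the finite map $\calV_T\to\calV_T/W=\calV_G$, and only finitely many arise since $C_G(H)/T$ ranges over subgroups of the finite group $W$ (cf.\ the proof of Theorem~\ref{MainThm} and Lemma~\ref{SubgroupNuclear}). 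As $k$ is algebraically closed, $\calV_G$ is a Jacobson scheme, so a closed subset is determined by the points of $\calP_G$ it contains together with whether it contains the origin; the latter holds for both sides exactly when $H^*(BG,k)$ is not polynomial, by Remark~\ref{OriginSubtlety}. Hence the two closed sets coincide, and $\calY_G = \mathrm{Supp}\,\Dsg(C^*(BG,k))$.

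The genuine content lies entirely in the inputs --- the formality statement \ref{BGFormal}, the singularity-category computation of the preceding theorem, and Theorem~\ref{MainThm} --- so the only real care needed is the passage from the pointwise language of Theorem~\ref{MainThm} to the scheme-theoretic language in which the support naturally lives, for which one must confirm that the nucleus is genuinely Zariski-closed; this is routine given Lemmas~\ref{SubgroupNuclear} and~\ref{NormaliserSame}. Finally, should one wish to work over a general field in which $|W|$ is invertible rather than an algebraically closed one, the equality is expected to follow by faithfully flat base change along $k\to\overline{k}$, which is compatible with formality of $C^*(BG,-)$, with the formation of $\Dsg$ and its support, and with the singular locus.
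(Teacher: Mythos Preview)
Your proposal is correct and follows the route the paper intends: the paper gives no proof for this corollary, treating it as immediate from formality (Theorem~\ref{BGFormal}), the preceding theorem identifying $\mathrm{Supp}\,\Dsg$ with $\mathrm{Sing}^h$, and Theorem~\ref{MainThm}. You are in fact more careful than the paper in one respect: Theorem~\ref{MainThm} (and Theorem~\ref{SingInv}) are stated only for \emph{points}, i.e.\ closed points, whereas the support of $\Dsg$ is a set of all homogeneous primes, and you correctly identify the need to pass from one to the other via closedness and the Jacobson property.

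One small point to tighten: your justification that $\calY_G$ is a \emph{finite} union of closed sets --- ``only finitely many arise since $C_G(H)/T$ ranges over subgroups of the finite group $W$'' --- is too quick as written, because distinct nuclear tori with the same centraliser still have different images $\mathrm{Im}(\calV_H\to\calV_G)$. The conclusion is nonetheless correct: for each subgroup $W'\le W$ that occurs as $C_G(H)/T$ for some torus $H$, the identity component $H_{W'}=(T^{W'})_0$ contains $H$, so $C_G(H_{W'})/T\subseteq W'$; but $W'$ visibly fixes $H_{W'}$, so also $C_G(H_{W'})/T\supseteq W'$ and hence equality holds. Thus $H_{W'}$ is nuclear whenever $W'$ is not a pseudoreflection group, and $\mathrm{Im}(\calV_{H_{W'}}\to\calV_T)=(\calV_T)^{W'}$ absorbs the image of every nuclear $H$ with $C_G(H)/T=W'$. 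This exhibits $\calY_G$ as the finite union over such $W'$ of the images of $(\calV_T)^{W'}$ in $\calV_G$, which is closed.
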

\begin{rem}\label{42Rem}
	Having obtained a reasonable notion of support for singularity categories, it is reasonable to ask what this could measure. This notion of support has been used before in work of Takahashi \cite{SingSubCatsRyo} and Stevenson \cite{SingSubCatsGreg} in order to classify thick subcategories of $\Dsg(R)$ for a (discrete) hypersurface ring $R$. Indeed, they show that such subcategories are in bijection with non-empty specialisation-closed subsets of $\mathrm{Sing}(R)$.
	
	This does not extend beyond hypersurfaces: if $(R,\m)$ is an Artinian local complete intersection of codimension $c>1$, Carlson and Iyengar \cite{ThickSubcats} show such subcategories are in bijection with non-empty specialisation-closed subsets of $\Spec^h(S)$, where $S$ is a graded polynomial ring on $c$ generators. Thus, there are far more thick subcategories of $\Dsg(R)$ than there are subsets of $\{\m\}$.
\end{rem}

\subsection{Conjectures on Generation}
We now consider two conjectures of \cite{benson2023modules}, transported to Lie groups. Recall we assume $G$ is a compact Lie group, $T$ a \textit{normal} maximal torus, $W=G/T$ the Weyl group, and $k$ is a field over which $\left|W\right|$ is invertible.

We consider the map $i:C_*T\to C_*G$ of DG algebras, and the corresponding restriction/(co)induction adjunctions $i_*\dashv i*\dashv i_!$.

Note also, as a $C_*T$-module, $C_*G$ is a free module $$i^*C_*G\cong\bigoplus_{w\in W}C_*T$$ and thus the (co)induction functors may be considered as non-derived. In particular, $i_*(k)=C_*W$, the standard group algebra. Due to $T$ having finite index in $G$, (co)induction in this situation enjoys the same properties as for finite groups:
\begin{lem}
	Induction and coinduction coincide. That is, there is a natural equivalence $i_*\simeq i_!$.
\end{lem}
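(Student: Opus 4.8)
The plan is to compute both functors explicitly against the $(C_*G,C_*T)$-bimodule $C_*G$ and then reduce the statement to a \emph{Frobenius extension} property: that $C_*G$ is self-dual as a left $C_*T$-module in a way compatible with its right $C_*G$-action. Since $i^*C_*G\cong\bigoplus_{w\in W}C_*T$ is a finite free DG $C_*T$-module, induction and coinduction may be taken non-derived, so $i_*(M)\cong C_*G\otimes_{C_*T}M$ and $i_!(M)\cong\Hom_{C_*T}(C_*G,M)$, naturally in $M$. Dualizability of $C_*G$ over $C_*T$ then gives a natural isomorphism $\Hom_{C_*T}(C_*G,M)\cong\omega\otimes_{C_*T}M$, where $\omega:=\Hom_{C_*T}(C_*G,C_*T)$ is formed using the \emph{left} $C_*T$-module structure on $C_*G$ and carries a $(C_*G,C_*T)$-bimodule structure, the left $C_*G$-action coming from the right $C_*G$-action on $C_*G$. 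Thus it suffices to produce an isomorphism of $(C_*G,C_*T)$-bimodules $C_*G\xrightarrow{\ \sim\ }\omega$; tensoring with this isomorphism then gives the required natural equivalence $i_*\simeq i_!$, and chasing the identifications shows the $C_*G$-module structures match.

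To construct the bimodule isomorphism, choose point-set coset representatives $g_w\in G$ of $T$ for each $w\in W$, with $g_1=e$. Because $T$ is normal these are simultaneously left and right coset representatives, and they present $C_*G$ as a twisted group algebra $C_*T\rtimes_{\sigma,\tau}W$ over the commutative DGA $C_*T$: here $W$ acts on $C_*T$ by conjugation and $\tau(v,w)=g_{vw}^{-1}g_vg_w\in T\subseteq(C_*T)_0$ is a $2$-cocycle taking values in units. Projection onto the $w=1$ coefficient is a $C_*T$-bilinear trace $\varepsilon\colon C_*G\to C_*T$, and the pairing $(x,y)\mapsto\varepsilon(xy)$ is nondegenerate: the basis dual to $\{g_w\}$ is $\{g_w^{-1}\,\tau(w^{-1},w)^{-1}\}$ (or the analogous corrected representatives), exactly as for ordinary and twisted group algebras. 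This exhibits $C_*T\to C_*G$ as a Frobenius extension, so $x\mapsto\varepsilon(x\cdot-)$ is the desired isomorphism $C_*G\xrightarrow{\sim}\omega$ of $(C_*G,C_*T)$-bimodules, and combining with the previous paragraph yields $i_!(M)\cong\omega\otimes_{C_*T}M\cong C_*G\otimes_{C_*T}M\cong i_*(M)$ naturally.

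The only real obstacle is the chain-level bookkeeping in the second step: one must check that the chosen point-set coset representatives make $C_*G$ into a \emph{strict} twisted group algebra over the commutative DGA $C_*T$, so that $\varepsilon$ and the dual basis are honest chain maps, and that the cocycle $\tau$ genuinely takes values in units of $(C_*T)_0$ (which it does, since $\tau(v,w)\in T$ and $C_0T\supseteq T$ contains these as invertible elements). Once this presentation is in place, the verification of nondegeneracy is the same short computation as in the classical case. Alternatively, one may bypass bases entirely by observing that $G$ is a (framed, hence $C_*T$-orientable) compact manifold and that the trivial $T$-bundle $G\to W$ makes relative Atiyah--Poincar\'e--Lefschetz duality supply the self-duality of $C_*G$ over $C_*T$ with no shift, the relative dimension being $\dim G-\dim T=0$; but the bare-hands twisted-group-algebra argument above is shortest.
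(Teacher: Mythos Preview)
Your argument is correct and follows essentially the same route as the paper. Both proofs hinge on the trace map (your $\varepsilon$, the paper's $\rho$) given by projection onto the identity component, and both use it to exhibit the Frobenius-type self-duality $C_*G\xrightarrow{\sim}\Hom_{C_*T}(C_*G,C_*T)$. The paper simply asserts this adjoint map is an isomorphism (implicit in the free decomposition $i^*C_*G\cong\bigoplus_{w\in W}C_*T$), whereas you spell it out via the twisted-group-algebra presentation and an explicit dual basis. From there, the paper passes to general $M$ by a localising-subcategory argument (both functors are exact, preserve coproducts, and $\varphi_{C_*T}$ is an isomorphism), while you invoke dualizability of $C_*G$ over $C_*T$ to identify $\Hom_{C_*T}(C_*G,M)\cong\omega\otimes_{C_*T}M$. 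These are two phrasings of the same finiteness fact.

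Two small remarks. First, your worry in the third paragraph is not a real obstacle: the coset representatives $g_w$ are honest points of $G$, hence honest $0$-cycles in $C_*G$, and since the connected components of $G$ are exactly the cosets $g_wT$, the decomposition $C_*G=\bigoplus_w g_w\cdot C_*T$ and the relations $g_vg_w=g_{vw}\tau(v,w)$ hold \emph{strictly} at the chain level, with $\tau(v,w)\in T\subseteq (C_*T)_0$ genuinely invertible. No homotopy-coherence bookkeeping is required. Second, be careful with sides: for your map $x\mapsto\varepsilon(x\cdot-)$ to intertwine the left $C_*G$-actions as you describe them, you actually want $x\mapsto\varepsilon(-\cdot x)$; the nondegeneracy of the pairing holds in either variable, so this costs nothing, but it is worth getting the bimodule compatibility on the correct side.
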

\begin{proof} Note first that projection onto the identity factor gives us a $C_*T$-module morphism $\rho:i^*C_*G\to C_*T$, of which $i$ is a section. The adjoint map $\hat\rho:C_*G\to\Hom_{C_*T}(C_*G,C_*T)$ is then an isomorphism. We then consider, for $C_*T$-modules $M$, the composite
	$$C_*G\otimes_{C_*T}M\to \Hom_{C_*T}(C_*G,C_*T)\otimes_{C_*T}M\xrightarrow{\text{composition}}\Hom_{C_*T}(C_*G,M)$$
	which is a natural transformation $\varphi_M:i_*(M)\to i_!(M)$.

	Both functors are exact, and commute with all coproducts ($i_!$ since $C_*G$ is a small $C_*T$-module). Thus the collection of $C_*T$-modules $M$ for which $\varphi_M$ is an equivalence is localising. But since it also evidently contains $C_*T$ itself, it is the entire category of $C_*T$-modules.
\end{proof}

We now state the two conjectures. Recall by the Rothenberg-Steenrod construction $C^*BG\simeq\Hom_{C_*G}(k,k)$. In the original conjectures, $G$ is a finite group and $T$ is a Sylow-$p$-subgroup, where the chains $C_*G$ are just the ordinary group algebra.

\begin{conj}[cf. \cite{benson2023modules} 1.6]\label{conj16}
	$\calD^b(C^*BG)$ is generated as a thick subcategory of $\calD(C^*BG)$ by the objects $\Hom_{C_*G}(k,X)$ for $X$ in $\calD^b(C_*G)$.
\end{conj}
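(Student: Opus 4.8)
The plan is to show that the thick subcategory on the right-hand side of Conjecture~\ref{conj16} is exactly $\mathrm{Thick}_{C^*BG}(C^*BT)$, and then to identify this with $\calD^b(C^*BG)$ by a short commutative-algebra argument exploiting that $C^*BT$ is regular.

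First I would pin down the generators. Because $T$ has finite index $[G:T]=\left|W\right|$ in $G$ and this is invertible, the usual transfer argument applies: using $i_*\simeq i_!$, the composite of the unit $X\to i_!i^*X\simeq i_*i^*X$ with the counit $i_*i^*X\to X$ is multiplication by $\left|W\right|$, so every $X\in\calD(C_*G)$ is a natural retract of $i_*i^*X$. Applying the exact functor $\Hom_{C_*G}(k,-)\colon\calD(C_*G)\to\calD(C^*BG)$, and then using $i_*\simeq i_!$ and tensor-hom adjunction,
$$\Hom_{C_*G}\bigl(k,\,i_*i^*X\bigr)\;\simeq\;\Hom_{C_*G}\bigl(k,\,\Hom_{C_*T}(C_*G,i^*X)\bigr)\;\simeq\;\Hom_{C_*T}\bigl(k,\,i^*X\bigr),$$
and by functoriality of the Rothenberg-Steenrod construction this is an isomorphism of $C^*BG$-modules, $C^*BG$ acting through the restriction map $C^*BG\to C^*BT$. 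Now $H_*(T,k)$ is a finite-dimensional local algebra, so $\calD^b(C_*T)=\mathrm{Thick}_{C_*T}(k)$; applying $\Hom_{C_*T}(k,-)$ sends this into $\mathrm{Thick}_{C^*BT}\bigl(\Hom_{C_*T}(k,k)\bigr)=\mathrm{Thick}_{C^*BT}(C^*BT)$, which restricts into $\mathrm{Thick}_{C^*BG}(C^*BT)$. Hence every generator $\Hom_{C_*G}(k,X)$ lies in $\mathrm{Thick}_{C^*BG}(C^*BT)$; conversely taking $X=i_*k$ (which lies in $\calD^b(C_*G)$ since induction along the finite free extension $i$ preserves coefficient-finiteness) recovers $C^*BT$ itself. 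So the right-hand side of the conjecture generates precisely $\mathrm{Thick}_{C^*BG}(C^*BT)$, and it remains to prove $\calD^b(C^*BG)=\mathrm{Thick}_{C^*BG}(C^*BT)$.

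Since $C^*BG$ and $C^*BT$ are formal (\ref{BGFormal}), this is a statement about the discrete graded rings $R=H^*BG$ and $S=H^*BT$: that $\calD^b(R)=\mathrm{Thick}_R(S)$. Recall $R=S^W$, so $S$ is finite over $R$ by Noether's theorem, giving $\mathrm{Thick}_R(S)\subseteq\calD^b(R)$. For the reverse, the transfer $\tfrac1{\left|W\right|}\sum_{w\in W}w$ is an idempotent $R$-linear endomorphism of $S$ with image $R$, so $R$ is a direct summand of $S$ as an $R$-module. Given any finitely generated $R$-module $N$, the module $N\otimes_R S$ is finitely generated over $S$; as $S$ is a polynomial ring, hence regular, it admits a finite free resolution over $S$, and restricting that resolution to $R$ shows $N\otimes_R S\in\mathrm{Thick}_R(S)$. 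But the splitting of $R\hookrightarrow S$ makes $N$ an $R$-module retract of $N\otimes_R S$, so $N\in\mathrm{Thick}_R(S)$. Since $\calD^b(C^*BG)$ is generated by finitely generated modules (Remark \ref{fgExplanation}), this gives $\calD^b(C^*BG)\subseteq\mathrm{Thick}_{C^*BG}(C^*BT)$, and the conjecture follows.

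I expect the only genuinely delicate point to be the first step — specifically, checking that the adjunction isomorphism above respects the $C^*BG$-action (through $C^*BG\to C^*BT$), so that the objects $\Hom_{C_*G}(k,X)$ really land inside $\mathrm{Thick}_{C^*BG}(C^*BT)$ rather than merely inside the derived category of $C^*BT$, together with making the transfer argument for the DG algebra $C_*G$ precise. Once the problem is transported to the commutative rings $R\subseteq S$, it is essentially immediate because $S$ is regular and $R$ is a retract of it.
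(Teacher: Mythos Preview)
The statement you are attempting to prove is left as a \emph{conjecture} in the paper; it is not proved in general. What the paper does establish is (a) the equivalence of Conjecture~\ref{conj16} with Conjecture~\ref{conj17} (that $\calD^b(C^*BG)=\mathrm{Thick}_{C^*BG}(C^*BT)$), and (b) that Conjecture~\ref{conj17} holds under the \emph{additional hypothesis} that $\calD^b(C^*BG)$ is generated by formal modules. Your first paragraph is essentially a correct version of (a), and the core of your second paragraph is exactly the paper's argument for (b). So structurally you have reproduced what the paper does --- but you have then asserted the missing hypothesis as though it were established.

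The gap is in the sentence ``Since $\calD^b(C^*BG)$ is generated by finitely generated modules (Remark~\ref{fgExplanation})''. Remark~\ref{fgExplanation} does not say this: it says that for formal $R$ the objects of $\calD^b(R)$ are the \emph{coefficient}-finitely generated DG modules, i.e.\ those $M$ with $H_*M$ finitely generated over $R_*$. Such an $M$ need not be quasi-isomorphic to any graded $R$-module with zero differential, and your retract argument with the underived $N\otimes_RS$ only applies to such formal $N$. Whether $\calD^b(C^*BG)$ is thickly generated by formal modules is precisely the hypothesis the paper isolates and does \emph{not} prove. Nor is there an easy repair via the derived tensor product: one would need $M\otimes_R^{\mathbf L}S$ to be small over $S$, but $(M\otimes_R^{\mathbf L}S)\otimes_S^{\mathbf L}k\simeq M\otimes_R^{\mathbf L}k$, so this holds exactly when $M$ was already small over $R$; and $S$ has finite projective dimension over $R$ only when $W$ is a pseudoreflection group, so the Tor groups need not be bounded. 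Contrary to your closing remark, it is the second step, not the first, that contains the genuine difficulty.
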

\begin{conj}[cf. \cite{benson2023modules} 1.7]\label{conj17}
	The bounded derived category $\calD^b(C^*BG)$ is generated by the module $C^*BT$, where $T$ is a maximal torus of $G$.
\end{conj}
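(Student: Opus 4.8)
The plan is to deduce Conjecture \ref{conj17} from Conjecture \ref{conj16}, which I would treat as established immediately beforehand, and then — since it carries the genuine content — to indicate how I would go after Conjecture \ref{conj16} itself. The reduction rests on one clean observation: the bounded derived category of the chains $C_*G$ is \emph{itself} thick-generated by the single module $i_*(k)$. Indeed $i_*(k)=C_*G\otimes_{C_*T}k=C_*W$ has coefficients the semisimple group algebra $kW$ concentrated in degree $0$; and since $G$ is compact, $\pi_*C_*G=H_*(G,k)$ is finite-dimensional over $k$ with $\pi_0C_*G=H_0(G,k)=kW$ semisimple (as $\left|W\right|$ is invertible). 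Hence any $X\in\calD^b(C_*G)$, having finite-dimensional homology, is built via its finite Postnikov tower from shifts of its homology groups, each a finite-dimensional $kW$-module and therefore a summand of a sum of copies of $i_*(k)$; so $\calD^b(C_*G)=\mathrm{Thick}_{C_*G}(i_*(k))$. Moreover, using $i_*\simeq i_!$ and the adjunction $i^*\dashv i_!$, one has $\Hom_{C_*G}(k,i_*(k))\simeq\Hom_{C_*G}(k,i_!(k))\simeq\Hom_{C_*T}(i^*k,k)=\Hom_{C_*T}(k,k)\simeq C^*BT$, and this equivalence is compatible with the $C^*BG$-module structures (the one on $C^*BT$ being the standard one via restriction $BT\to BG$).

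Granting all this, Conjecture \ref{conj17} is immediate. By Conjecture \ref{conj16}, $\calD^b(C^*BG)=\mathrm{Thick}_{C^*BG}(\{\Hom_{C_*G}(k,X):X\in\calD^b(C_*G)\})$; since $\Hom_{C_*G}(k,-)$ is exact and $\calD^b(C_*G)=\mathrm{Thick}_{C_*G}(i_*(k))$, each generator $\Hom_{C_*G}(k,X)$ lies in $\mathrm{Thick}_{C^*BG}(\Hom_{C_*G}(k,i_*(k)))=\mathrm{Thick}_{C^*BG}(C^*BT)$, so $\calD^b(C^*BG)\subseteq\mathrm{Thick}_{C^*BG}(C^*BT)$. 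The reverse inclusion holds because $C^*BT$ is coefficient-finitely generated over $C^*BG$: its coefficients $H^*BT$ form a finite module over $H^*BG=(H^*BT)^W$ by finiteness of invariants, so $C^*BT\in\calD^b(C^*BG)$. By formality of $C^*BG$ and $C^*BT$, the whole statement can also be read purely commutative-algebraically as $\calD^b(H^*BG)=\mathrm{Thick}_{H^*BG}(H^*BT)$ — equivalently, $\overline{H^*BT}$ generates $\Dsg(H^*BG)$ — for which a useful structural input is that $H^*BT\rtimes W$ is a non-commutative resolution of $H^*BG=(H^*BT)^W$ of finite global dimension and that $H^*BG$ is a summand of $H^*BT$ via the Reynolds operator.

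The crux, and the step I expect to be the main obstacle, is Conjecture \ref{conj16} itself; it is genuinely stronger than a finiteness statement. When the nucleus is strictly larger than $\{0\}$, $\calD^b(C^*BG)$ is \emph{not} generated by $C^*BG$ together with its residue field alone — the residue field only sees the irrelevant ideal in the sense of singular support, whereas by Theorem \ref{MainThm} the nucleus is spread over a positive-dimensional locus — so the generator $C^*BT$, whose singular support is the whole nucleus, is doing essential work, and $\Dsg(H^*BG)$ may have far more thick subcategories than the nucleus has specialization-closed subsets. To prove \ref{conj16} I would try to exploit the Koszul duality between $C_*G$ and $C^*BG$ more fully: either via the Gorenstein-duality properties of $C_*G$ from \cite{DualityIn}, or, on the formal side, via the recollement relating $\Dsg(H^*BG)$, $\calD^b(H^*BT\rtimes W)=\mathrm{Perf}(H^*BT\rtimes W)$ and the relative singularity category, transporting the trivial generation of $\mathrm{Perf}(H^*BT\rtimes W)$ by its indecomposable projectives down to $\Dsg(H^*BG)$. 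Controlling the essential image, up to thick closure, of $\Hom_{C_*G}(k,-)$ restricted to $\calD^b(C_*G)$ is where the difficulty concentrates.
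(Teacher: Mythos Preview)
The statement you are attempting is stated in the paper as a \emph{conjecture}, not a theorem: the paper does not prove it, and in particular Conjecture~\ref{conj16} is not ``established immediately beforehand'' --- it too is left open. What the paper \emph{does} prove is the equivalence of Conjectures~\ref{conj16} and~\ref{conj17}, and your reduction is precisely the content of that equivalence lemma. The two ingredients you isolate --- that $\Hom_{C_*G}(k,i_*(k))\simeq\Hom_{C_*T}(k,k)\simeq C^*BT$ via $i_*\simeq i_!$, and that $\calD^b(C_*G)=\mathrm{Thick}_{C_*G}(C_*W)$ because $H_*G$ is finite-dimensional with $H_0G=kW$ semisimple --- are exactly the two halves of the paper's proof (the paper cites \cite{DualityIn}~3.16 for the Postnikov-tower step you spell out). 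So your deduction of \ref{conj17} from \ref{conj16} is correct and matches the paper, but it is a reduction between two open statements, not a proof.

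Your concluding paragraph correctly diagnoses that Conjecture~\ref{conj16} is where the genuine content lies, and your proposed attacks (Koszul duality, the skew group ring $H^*BT\rtimes W$ as a noncommutative resolution, recollements) are reasonable but speculative --- the paper does not pursue any of them. Instead, the paper offers a different partial result you do not mention: if $\calD^b(C^*BG)$ is generated by \emph{formal} modules, then Conjecture~\ref{conj17} holds, via an elementary argument using the non-derived tensor product $M\otimes_{H^*BG}H^*BT$ and the Reynolds retraction. This sidesteps Conjecture~\ref{conj16} entirely rather than going through it.
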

\begin{rem} Conjecture \ref{conj17} can be put in a more general context of invariant rings in the same way as Section \ref{InvSection}: the statement is the bounded derived category of $k[V]^G$ is generated by $k[V]$.
\end{rem}

The proof of the equivalence of these two conjectures is, perhaps unsurprisingly, almost identical to the corresponding ones for finite groups:

\begin{lem}
	Conjectures \ref{conj16} and \ref{conj17} are equivalent.
\end{lem}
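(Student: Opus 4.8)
The plan is to transport the finite-group argument verbatim, built around the restriction/(co)induction adjunctions $i_*\dashv i^*\dashv i_!$ and the identification $i_*\simeq i_!$ just established. The implication $\ref{conj17}\Rightarrow\ref{conj16}$ should be essentially formal: I would first observe that $i_*k$ lies in $\calD^b(C_*G)$ (its underlying complex $C_*G\otimes_{C_*T}k$ has finite-dimensional homology, as $C_*G$ is free of finite rank over $C_*T$), and that $\Hom_{C_*G}(k,i_*k)\simeq\Hom_{C_*G}(k,i_!k)\simeq\Hom_{C_*T}(k,k)=C^*BT$ by $i_*\simeq i_!$ and the adjunction $i^*\dashv i_!$. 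Thus $C^*BT$ occurs among the generators listed in Conjecture \ref{conj16}; granting that each such $\Hom_{C_*G}(k,X)$ lies in $\calD^b(C^*BG)$ (checked below), the thick subcategory they generate both contains $\mathrm{Thick}(C^*BT)=\calD^b(C^*BG)$ and sits inside $\calD^b(C^*BG)$, so the two coincide on this side.

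For $\ref{conj16}\Rightarrow\ref{conj17}$ the key reduction I would prove is that $\Hom_{C_*G}(k,X)\in\mathrm{Thick}_{C^*BG}(C^*BT)$ for every $X\in\calD^b(C_*G)$. A transfer argument should give $\calD^b(C_*G)=\mathrm{Thick}\big(i_*\calD^b(C_*T)\big)$: since $C_*G$ is free of rank $\lvert W\rvert$ over $C_*T$ and $\lvert W\rvert$ is invertible in $k$, the composite $X\to i_!i^*X\simeq i_*i^*X\to X$ of unit and counit is multiplication by a unit, so $X$ is a retract of $i_*i^*X$ with $i^*X\in\calD^b(C_*T)$. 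Exactness of $\Hom_{C_*G}(k,-)$ then reduces the claim to $X=i_*Y$ with $Y\in\calD^b(C_*T)$, where the adjunction $i^*\dashv i_!$ together with $i_!\simeq i_*$ produces $\Hom_{C_*G}(k,i_*Y)\simeq\Hom_{C_*T}(k,Y)$. Because this isomorphism is natural in the first variable it should intertwine the $C^*BG=\mathrm{End}_{C_*G}(k)$-action with the restriction, along the map of ring spectra $C^*BG\to C^*BT$, of the natural $C^*BT=\mathrm{End}_{C_*T}(k)$-action on the right. I would then invoke $\calD^b(C_*T)=\mathrm{Thick}_{C_*T}(k)$: for $T\cong(S^1)^n$ the DGA $C_*T$ is formal and equal to $\Lambda_k(x_1,\dots,x_n)$ with $\lvert x_i\rvert=1$, a finite-dimensional local DGA, for which a standard d\'evissage on total homological dimension shows the trivial module finitely builds every bounded complex. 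Applying the exact functor $\Hom_{C_*T}(k,-)$ carries $\mathrm{Thick}_{C_*T}(k)$ into $\mathrm{Thick}_{C^*BT}\big(\Hom_{C_*T}(k,k)\big)=\mathrm{Thick}_{C^*BT}(C^*BT)$, and restriction along $C^*BG\to C^*BT$ carries this into $\mathrm{Thick}_{C^*BG}(C^*BT)$, proving the reduction. In particular every $\Hom_{C_*G}(k,X)$ lies in $\calD^b(C^*BG)$ because $C^*BT$ does (its cohomology $H^*BT$ is finite over $H^*BG=(H^*BT)^W$), which also settles the finiteness point left open above. Finally, under Conjecture \ref{conj16} these objects generate $\calD^b(C^*BG)$, hence $\calD^b(C^*BG)=\mathrm{Thick}(C^*BT)$, which is Conjecture \ref{conj17}.

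I expect the only steps requiring genuine care to be the transfer/retract argument (identical to the finite-group case, hinging on $\lvert W\rvert\in k^\times$ and the freeness of $C_*G$ over $C_*T$) and the verification that the adjunction isomorphism $\Hom_{C_*G}(k,i_*Y)\simeq\Hom_{C_*T}(k,Y)$ is compatible with the $C^*BG$- and $C^*BT$-module structures; the bookkeeping of these two actions is the most delicate point, but no new idea beyond the finite-group template is needed.
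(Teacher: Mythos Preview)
Your argument is correct and reaches the same conclusion, but the reduction for the harder inclusion differs from the paper's. Both start by observing $C^*BT\simeq\Hom_{C_*G}(k,i_*k)$ with $i_*k\simeq C_*W\in\calD^b(C_*G)$, so $C^*BT$ sits among the generators of Conjecture~\ref{conj16}. For the other inclusion, the paper stays entirely on the $C_*G$ side: any $X\in\calD^b(C_*G)$ is finitely built from $k$ (since $H_*G$ is finite-dimensional with $H_0G=kW$ augmented to $k$ through a nilpotent kernel, invoking \cite{DualityIn}~3.16), and $k$ is in turn finitely built from $C_*W$ because $\lvert W\rvert$ is invertible; hence $X\in\mathrm{Thick}_{C_*G}(C_*W)$, and applying the exact functor $\Hom_{C_*G}(k,-)$ carries this into $\mathrm{Thick}_{C^*BG}(C^*BT)$ in one stroke. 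Your route instead uses the transfer retract to reduce to $X=i_*Y$, then the adjunction $\Hom_{C_*G}(k,i_*Y)\simeq\Hom_{C_*T}(k,Y)$ to land in $\calD(C^*BT)$, invokes $\calD^b(C_*T)=\mathrm{Thick}_{C_*T}(k)$ for the exterior algebra, and finally restricts along $C^*BG\to C^*BT$. This works, and the module-structure compatibility you flag is handled by naturality of the adjunction in the first variable. The paper's version is shorter because it never leaves $\calD(C_*G)$ and never needs to track the $C^*BG$- versus $C^*BT$-action; your version trades that economy for a more explicit use of the ambidexterity $i_*\simeq i_!$ and makes the role of the torus more transparent.
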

\begin{proof}
	Note $C_*W\simeq i_*(k)\simeq i_!(k)$, and so $\Hom_{C_*G}(k,C_*W)\simeq\Hom_{C_*T}(k,k)=C^*BT$. Since $C_*W$ is a bounded $C_*G$ module, this proves that \ref{conj16} implies \ref{conj17}.

	Conversely, we need to show any object of the form $\Hom_{C_*G}(k,X)$ with $X\in\calD^b(C_*G)$ is built from $C^*BT$. It thus suffices to show any $X\in\calD^b(C_*G)$ is finitely built from $C_*W$. Indeed, $X$ is finitely built from $k$ (since $H_*G$ is finite dimensional over $k$ and $H_0G=kW\to k$ has nilpotent kernel, see \cite{DualityIn} 3.16). But as the order of $W$ is invertible, $C_*W$ finitely builds $k$ over itself, thus also over $C_*G$.
\end{proof}

We end this section with a condition under which the conjecture holds.
\begin{prop}
	Suppose $\calD^b(C^*BG)$ is generated by formal modules (that is, under the equivalence $C^*BG\simeq H^*BG$, those modules $M$ that are equivalent to modules with zero differential). Then Conjecture \ref{conj17} holds.
\end{prop}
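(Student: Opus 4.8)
The strategy is to use formality to turn the statement into the commutative-algebraic reformulation already noted after Conjecture \ref{conj17}, and then to run the classical ``restriction to a Sylow subgroup'' transfer argument. Write $A=H^*BG$, which since $T$ is normal with $G/T=W$ and $|W|$ is invertible is the invariant ring $A=k[V]^W$ with $V=H_2(BT)$, and let $S=k[V]=H^*BT$; under the formality equivalences $C^*BG\simeq A$ and $C^*BT\simeq S$, and the restriction map $C^*BG\to C^*BT$ becomes the inclusion $A=S^W\hookrightarrow S$. In particular $C^*BT$, regarded as a $C^*BG$-module, is identified with the formal module $S$ carrying its evident $A$-action. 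By hypothesis $\calD^b(C^*BG)=\mathrm{Thick}(\{\text{formal modules}\})$, i.e.\ the thick subcategory generated by the finitely generated graded $A$-modules viewed as DG modules with zero differential; since $C^*BT\simeq S$ is itself such a module, $\mathrm{Thick}_A(S)\subseteq\calD^b(C^*BG)$, so it suffices to show that every finitely generated graded $A$-module $M$ lies in $\mathrm{Thick}_A(S)$.

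The transfer step is the heart of the argument, exactly as in the finite-group case of \cite{benson2023modules}. The Reynolds operator $\pi=\tfrac{1}{|W|}\sum_{w\in W}w\colon S\to A$ is $A$-linear and satisfies $\pi|_A=\mathrm{id}$, so it splits the inclusion $A\hookrightarrow S$. Applying the ordinary (underived) tensor product $-\otimes_A M$, the unit map $M\to S\otimes_A M$, $m\mapsto 1\otimes m$, is split by $\pi\otimes\mathrm{id}_M$ (since $\pi(1)=1$), exhibiting $M$ as a retract of $S\otimes_A M$ in the category of $A$-modules, hence in $\calD(A)$. Thus $M\in\mathrm{Thick}_A(S\otimes_A M)$.

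It remains to place $S\otimes_A M$ in $\mathrm{Thick}_A(S)$. Now $S\otimes_A M$ is a finitely generated module over the graded polynomial, hence regular, ring $S$, so it has a finite graded free resolution and is therefore a small object of $\calD(S)$, i.e.\ $S\otimes_A M\in\calD^c(S)=\mathrm{Thick}_S(S)$. The restriction-of-scalars functor $\calD(S)\to\calD(A)$ along $A\hookrightarrow S$ is triangulated and sends the $S$-module $S$ to the $A$-module $S$, hence carries $\mathrm{Thick}_S(S)$ into $\mathrm{Thick}_A(S)$; therefore $S\otimes_A M\in\mathrm{Thick}_A(S)$, and consequently $M\in\mathrm{Thick}_A(S)$. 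Combined with the previous paragraph this gives $\calD^b(C^*BG)=\mathrm{Thick}_A(S)=\mathrm{Thick}_{C^*BG}(C^*BT)$, which is precisely Conjecture \ref{conj17}.

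The only genuinely delicate point is the identification in the first paragraph: one must check that under the formality results of the appendix the restriction map $C^*BG\to C^*BT$ is really the graded ring inclusion $S^W\hookrightarrow S$, so that ``$C^*BT$ as a $C^*BG$-module'' is literally the formal $A$-module $S$. Once this compatibility (and the module-finiteness of $S$ over $A$, together with Noetherianity of $A$, which is Noether's finiteness theorem) is in hand, the remainder is a soft diagram chase with thick subcategories, with no further homotopy-theoretic input.
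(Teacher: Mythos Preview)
Your argument is correct and is essentially the paper's own proof: both work over $A=H^*BG$ with $S=H^*BT$, use the non-derived tensor $S\otimes_A M$, invoke regularity of $S$ to place $S\otimes_A M$ in $\mathrm{Thick}_S(S)$ and hence in $\mathrm{Thick}_A(S)$ via restriction, and then use that $A$ is a retract of $S$ (your Reynolds operator) to exhibit $M$ as a retract of $S\otimes_A M$. Your write-up is more explicit about the transfer splitting and the formality compatibility of $C^*BG\to C^*BT$ with $A\hookrightarrow S$, points the paper leaves implicit, but the route is the same.
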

\begin{proof} We explicitly work over $H^*BG$ with zero differential, noting $H^*BT$ is then a finite formal module. Given a formal module $M$, consider the \textit{non-derived} tensor product $M\otimes_{H^*BG}H^*BT$ as graded modules. Considered as a formal $H^*BT$-module, this is finitely generated and thus as $H^*BT$ is regular, it is finitely built.

	So $H^*BT$ finitely builds $M\otimes_{H^*BG}H^*BT$ over $H*BG$. But over $H^*BG$, $H^*BG$ is a retract of $H^*BT$, and so $M$ is a retract of $M\otimes_{H^*BG}H^*BT$.

	So all formal modules are generated by $H^*BT$.
\end{proof}
% \begin{rem}
% 	This hypothesis on the generation of a derived category, or more generally the statement that the bounded derived category of a formal DG algebra is generated by formal modules, is suspected to be true by the author. However, it has been fairly intractable to actually prove it, and appears open beyond regular rings and algebras that are finite dimensional over fields.
% \end{rem}
\appendix
\section{Formality}
Central to our discussion is the fact that, in our context, the DG algebra $C^*(BG)$ is formal. That is, it is quasi-isomorphic as a DG algebra to a DG algebra with zero differentials. Indeed, we state that here:
\begin{thm}\label{BGFormal}
	Let $G$ be a compact Lie group and $k$ a field over which the Weyl group of $G$ has invertible order. Then $C^*(BG,k)$ is a formal DG algebra.
\end{thm}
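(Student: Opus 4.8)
The plan is to reduce the statement to the case of a torus, establish formality there $W$-equivariantly, and then descend. Fix a maximal torus $T\leq G$ and write $W=N_G(T)/T$, so that $|W|$ is invertible in $k$. By Borel and Feshbach (\cite{BorelFibre,Feshbach}) the map $BN_G(T)\to BG$ is a quasi-isomorphism on $C^*(-,k)$, so we may assume $G$ has identity component $T$ with $G/T=W$. The extension $1\to T\to G\to W\to 1$ gives a fibration $BT\to BG\to BW=K(W,1)$; since $|W|$ is invertible in $k$ the base is $k$-acyclic in positive degrees, and a standard descent argument identifies $C^*(BG,k)$ with the homotopy fixed points $C^*(BT,k)^{hW}$ for the action of $W$ on $C^*(BT,k)$ coming from the conjugation action of $W$ on $T$. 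This is a genuine action of the group $W$ (because $T$ is abelian), so it acts by $E_\infty$-algebra automorphisms; and since $|W|$ is invertible in $k$ the norm map is an equivalence, so these homotopy fixed points agree with the strict fixed points, $C^*(BG,k)\simeq C^*(BT,k)^{W}$.

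The core input is then a $W$-equivariant formality equivalence $\phi\colon (H^*(BT,k),0)\xrightarrow{\ \sim\ }C^*(BT,k)$ of $E_\infty$-$k$-algebras (of commutative DG algebras when $\mathrm{char}\,k=0$). The relevant feature of $H^*(BT,k)=k[x_1,\dots,x_n]$ is that it is the free (graded-)commutative algebra on its degree-two part $H^2(BT,k)$. When $\mathrm{char}\,k=0$ one takes $\phi$ from the commutative model $A_{PL}(BT)$: the surjection of $kW$-modules from degree-two cocycles of $A_{PL}(BT)$ onto $H^2(BT,k)$ splits $W$-equivariantly since $kW$ is semisimple, and the induced map of commutative DG algebras out of $\mathrm{Sym}\big(H^2(BT,k)\big)=H^*(BT,k)$ is automatically $W$-equivariant and a quasi-isomorphism. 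When $\mathrm{char}\,k=p\nmid|W|$ there is no strictly commutative model, so one runs the same argument in $E_\infty$-$k$-algebras: writing $C^*(BT,k)\simeq C^*(\mathbb{CP}^\infty,k)^{\otimes n}$ (tensor over $k$) and invoking the $E_\infty$-formality of $C^*(\mathbb{CP}^\infty,k)$, one again lifts the generators $H^2(BT,k)$ through a $W$-equivariant splitting of the surjection onto it from degree-two cocycles, which is legitimate since $kW$ is semisimple.

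Finally, apply $(-)^{hW}$ to $\phi$. Being lax symmetric monoidal and with $|W|$ invertible in $k$, it sends the naive $W$-module $(H^*(BT,k),0)$ to $\big(H^*(BT,k)^{W},0\big)$ and $C^*(BT,k)$ to $C^*(BG,k)$ by the first paragraph; and $H^*(BT,k)^{W}\cong H^*(BG,k)$ by Borel. Hence $C^*(BG,k)\simeq\big(H^*(BG,k),0\big)$ as $E_\infty$-$k$-algebras, in particular as DG algebras.

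The main obstacle is the positive-characteristic ingredient of the middle paragraph, namely the $E_\infty$-formality of $C^*(\mathbb{CP}^\infty,\mathbb{F}_p)$. In characteristic zero everything here is classical, and even-gradedness of cohomology is on its own not enough to force formality of a DG algebra — there exist non-formal DG algebras with polynomial cohomology on four even-degree generators — so the geometry of $\mathbb{CP}^\infty$ is genuinely being used. The remaining points, that the descent $C^*(BG,k)\simeq C^*(BT,k)^{hW}$ is multiplicative and compatible with $\phi$, are routine given the finiteness and invertibility hypotheses.
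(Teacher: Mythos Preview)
Your strategy coincides with the paper's sketch: reduce to a subgroup whose cochains are already formal and which carries an action of a finite group of invertible order, then identify homotopy fixed points with strict fixed points. The paper uses the identity component $G_e$ with $G/G_e$ acting (and in positive characteristic simply cites \cite{BGFormal}); you instead pass all the way down to $T$ with $W$ acting, which is a harmless variant. In characteristic zero your argument via $A_{PL}$ and semisimplicity of $kW$ is the standard one and is complete.

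There is, however, a genuine gap in your positive-characteristic paragraph, and it is \emph{not} the one you flag. Granting the $E_\infty$-formality of $C^*(\mathbb{CP}^\infty,\mathbb{F}_p)$, you obtain a non-equivariant equivalence $k[x_1,\dots,x_n]\simeq C^*(BT,k)$, and you then propose to promote it to a $W$-equivariant one by ``lifting the generators $H^2(BT,k)$ through a $W$-equivariant splitting''. In characteristic zero this works because $\mathrm{Sym}\big(H^2(BT,k)\big)$ is the \emph{free} $E_\infty$-$k$-algebra on $H^2(BT,k)$, so a $W$-equivariant lift of generators extends uniquely to a $W$-equivariant $E_\infty$-map. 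In characteristic $p$ this fails: the free $E_\infty$-$\mathbb{F}_p$-algebra on a class in cohomological degree $2$ is strictly larger than $\mathbb{F}_p[x]$ (it supports nontrivial power operations, e.g.\ $Sq^1x$ is a new class), so a polynomial ring is a \emph{quotient} of the free object and a $W$-equivariant choice of degree-two cocycles does not determine, or even adjust, an $E_\infty$-algebra map. Your final sentence calling this step ``routine'' is therefore not justified.

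The fix is to replace the ``lift generators'' step by an obstruction-theoretic one: the cotangent complex of the polynomial ring $k[x_1,\dots,x_n]$ is free on $dx_1,\dots,dx_n$, so the Andr\'e--Quillen obstruction groups to producing (and making $W$-equivariant) an $E_\infty$-map $k[x_1,\dots,x_n]\to C^*(BT,k)$ lifting the identity on homotopy all vanish; invertibility of $|W|$ then lets one average to a $W$-equivariant lift. This is essentially what the reference the paper cites does, and is why the paper is content to outsource the positive-characteristic case rather than sketch it.
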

\begin{proof}[Sketch Proof] When $k$ is of positive characteristic, this is proven in \cite{BGFormal}. For characteristic zero, this is well-known, especially when $G$ is connected (thus $H^*(BG)$ is polynomial). When $G$ is not connected, one can observe that $C^*(BG_e)$ is formal where $G_e$ is the identity component, and that $C^*(BG)$ is the homotopy fixed points of $C^*(BG_e)$ under the action of the component group $G/G_e$. In characteristic zero, homotopy fixed points and ordinary fixed points coincide, and details notwithstanding this proves formality.
\end{proof}

For the purposes of this paper, the commutative-algebraic behaviour we have for formal DG algebras can be summarised as the following proposition:
\begin{prop}\label{LocalSmall}
	Let $(R,\m)$ be a Noetherian local graded ring, so that $\kappa=R/\m$ is a graded field. Let $M$ be a finitely generated graded module of $R$. The following are equivalent:
	\begin{enumerate}[i)]
		\item $M$ is small as a differential graded $R$-module.
		\item $M$ has a finite free resolution as a graded $R$-module.
		\item The ungraded localisation $M_\m$ has a finite free resolution as an $R_\m$-module.
		\item The ungraded completion $M_\m^\wedge$ has a finite free resolution as an $R_\m^\wedge$-module.
	\end{enumerate}
\end{prop}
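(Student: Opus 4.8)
The plan is to collapse all four conditions onto one central statement, namely that $M$ has finite projective dimension over $R$, and that this dimension is computed homologically as $\mathrm{pd}_R M = \sup\{\,i : \mathrm{Tor}^R_i(M,\kappa)\neq 0\,\}$ where $\kappa = R/\m$. Once this is in hand, each of (ii), (iii), (iv) will be visibly the same finiteness statement transported along a flat (indeed faithfully flat on the relevant modules) base change, and (i) will be the same thing phrased in $\calD(R)$.

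First I would handle (i) $\Leftrightarrow$ (ii). Since $\calD^c(R) = \mathrm{Thick}_R(R)$, $M$ being small (as a DG module with zero differential) means it is a retract in $\calD(R)$ of a bounded complex of finitely generated graded free modules; if $M$ has a finite free resolution this is immediate. For the converse, smallness of $M$ forces $R\Hom_R(M,\kappa)$ to be a perfect complex over the graded field $\kappa$, hence to have bounded, finitely generated cohomology, so $\mathrm{Ext}^i_R(M,\kappa)\cong\Hom_\kappa(\mathrm{Tor}^R_i(M,\kappa),\kappa)$ vanishes for $i\gg 0$; graded Nakayama then shows the minimal graded free resolution of $M$ terminates (its $i$-th term having rank $\dim_\kappa\mathrm{Tor}^R_i(M,\kappa)$), which gives (ii) and simultaneously records the Tor-formula for $\mathrm{pd}_R M$ above. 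One also notes here that, because $R$ is $*$local, finitely generated graded projectives are free, so there is no gap between ``finite projective dimension'' and ``finite free resolution''.

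Then I would do (ii) $\Leftrightarrow$ (iii) $\Leftrightarrow$ (iv) together. The forward implications are formal: $R\to R_\m$ and $R_\m\to R_\m^\wedge$ are flat, so a finite graded free resolution base-changes to a finite free resolution. For the converses the point is that finite projective dimension descends: $\mathrm{Tor}^R_i(M,\kappa)$ is a finitely generated graded $\kappa$-module, and applying the base changes $R\to R_\m\to R_\m^\wedge$ turns it into (a flat base change of) $\mathrm{Tor}^{R_\m}_i(M_\m,-)$ and then $\mathrm{Tor}^{R_\m^\wedge}_i(M_\m^\wedge,-)$ against the respective residue fields; since all the maps involved are flat and faithfully flat on these modules, vanishing is both preserved and reflected at each stage. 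Hence $\mathrm{pd}_R M$, $\mathrm{pd}_{R_\m} M_\m$ and $\mathrm{pd}_{R_\m^\wedge} M_\m^\wedge$ are simultaneously finite, and by the usual characterisation of finite projective dimension for finitely generated modules over Noetherian (local) rings this is equivalent to the existence of a finite free resolution in each setting.

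The main obstacle I expect is the bookkeeping around residue fields. The object $\kappa = R/\m$ is only a \emph{graded} field, so after ungraded localisation at $\m$ it need not be a field — it can, for instance, become a Laurent polynomial ring in one variable, whose fraction field is the genuine residue field of $R_\m$ — and similarly one must pin down $R_\m^\wedge/\m R_\m^\wedge$. In each case the natural map from the localised/completed $\kappa$ to the actual residue field is flat, so it does not affect Tor-vanishing, but making this identification precise, and checking carefully that the graded minimal free resolution over $R$ genuinely computes the ungraded projective dimension after these operations, is the one spot where the argument needs real care rather than being purely formal.
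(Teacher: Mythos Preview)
Your argument is correct and follows essentially the same strategy as the paper: both reduce (i)$\Leftrightarrow$(ii) to finiteness of $\mathrm{Tor}^R_*(M,\kappa)$ via graded Nakayama and the minimal resolution (you phrase it through $R\Hom$ and $\mathrm{Ext}$, the paper through the derived tensor product and $\mathrm{Tor}$), and both treat (ii)$\Leftrightarrow$(iii)$\Leftrightarrow$(iv) as standard commutative algebra. The one point the paper isolates more explicitly is the identification $H_n(M\otimes^L\kappa)=\bigoplus_{p+q=n}\mathrm{Tor}^R_{pq}(M,\kappa)$ via a separate totalisation lemma; in your write-up this is the implicit passage from ``$R\Hom_R(M,\kappa)$ has bounded homology in $\calD(R)$'' to ``$\mathrm{Ext}^i_R(M,\kappa)=0$ for $i\gg 0$'', which is true but deserves a sentence since the DG derived category and the derived category of graded modules are not literally the same.
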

\begin{rem}
	The latter three conditions are fairly standard and are only included for convenience earlier on. See for example \cite{BrunsHerzog} \S1.5. The equivalence of i) and ii) is the non-trivial result.
\end{rem}
\newcommand{\DG}{\mathbf{DG}}
\newcommand{\Ch}{\mathbf{Ch}}
To sketch the proof of this equivalence we must first discuss the \textbf{totalisation functor} from the category $\Ch_R$ of chain complexes of graded $R$-modules and $\DG_R$ of differential graded $R$-modules. That is, the functor $T:\Ch_R\to\DG_R$ which sends a complex $C$ to the DG module $T(C)$ with $$T(C)_n=\bigoplus_{i+j=n}C_{ij}$$ with differentials taken termwise from $C$, with a possible change of sign. One can observe the homology of $T(C)$ is the total complex of the bigraded homology of $C$, and thus quasi-isomorphisms are preserved leading to an induced exact functor $$T:\calD_\Ch(R)\to\calD_\DG(R)$$ between the two corresponding derived categories.

The crucial observation is that this is in fact a \textit{tensor-triangulated} functor, preserving each derived tensor-product in that $$T(C\otimes_\DG D)\simeq T(C)\otimes_\Ch T(D)$$
We do not require this in full force, we only need this for graded modules concentrated at $0$.
\begin{lem}\label{DGTensorProd}
	Let $M$ and $N$ be graded $R$-modules. Considered as formal DG $R$-modules, their derived tensor product has homology $$H_n(M\otimes_\DG N)=\bigoplus_{p+q=n}\mathrm{Tor}_{pq}^R(M,N)$$
\end{lem}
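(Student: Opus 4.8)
The plan is to reduce the computation to a free resolution of $M$ inside the category $\Ch_R$ of chain complexes of graded $R$-modules and to transport it to $\DG_R$ through the totalisation functor $T$. Choose a resolution $P_\bullet\to M$ of $M$ by graded free $R$-modules, concentrated in non-negative homological degrees. Since $T$ preserves quasi-isomorphisms, $T(P_\bullet)\to T(M)=M$ is an equivalence of DG $R$-modules; and since $P_\bullet$ is a bounded-below complex built from shifts of copies of $R$, its totalisation $T(P_\bullet)$ is a bounded-below, degreewise-free DG $R$-module, hence semi-free. Consequently $T(P_\bullet)$ may be used to compute the derived tensor product, so that $M\otimes_\DG N$ is represented by the \emph{non-derived} tensor product $T(P_\bullet)\otimes_\DG N$.

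Next I would identify this with a totalisation. Comparing degrees directly from the definition of $T$, one checks that for any $C\in\Ch_R$ and any graded module $N$, viewed as a complex concentrated in homological degree $0$, there is a natural isomorphism of DG $R$-modules $T(C)\otimes_\DG N\cong T(C\otimes_\Ch N)$, with differentials matching up to the usual sign conventions; this is the degenerate case of the tensor-triangulated structure on $T$ recalled above, and involves no derived functors since everything appearing is levelwise free or flat. Applying this with $C=P_\bullet$ gives $M\otimes_\DG N\simeq T(P_\bullet\otimes_\Ch N)$. Now $P_\bullet\otimes_\Ch N$ is just the complex $\cdots\to P_1\otimes_R N\to P_0\otimes_R N\to 0$ of graded modules, so its homology in bidegree $(p,q)$ is the internal-degree-$q$ part of $\mathrm{Tor}^R_p(M,N)$, which is $\mathrm{Tor}^R_{pq}(M,N)$ by definition. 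As the homology of a totalisation is the total complex of the bigraded homology, we conclude $H_n(M\otimes_\DG N)=\bigoplus_{p+q=n}\mathrm{Tor}^R_{pq}(M,N)$.

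The one point needing genuine care is the assertion that $T(P_\bullet)$ is cofibrant enough --- semi-free, or at least K-flat --- that the non-derived tensor product $T(P_\bullet)\otimes_\DG N$ already computes $M\otimes_\DG N$; this is exactly where the bounded-below hypothesis on the resolution $P_\bullet$ enters. The remaining work is routine bookkeeping: carrying the internal grading $q$ along with the resolution degree $p$, and checking that the Koszul signs in $T$ are compatible with those in the tensor product of complexes. Alternatively, one could simply invoke the full tensor-triangulated property of $T$ recalled above, with $C=M$ and $D=N$, together with the identification $H_{pq}(M\otimes_\Ch N)=\mathrm{Tor}^R_{pq}(M,N)$; but since only this special case is required, the explicit resolution argument is the most economical.
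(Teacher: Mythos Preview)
Your argument is correct and follows the same route as the paper's sketch: take a free resolution $P_\bullet$ of $M$ in $\Ch_R$, observe that $T(P_\bullet)\to M$ is a semi-free (hence cofibrant) replacement in $\DG_R$, and then verify by hand that $T$ carries the non-derived tensor product $P_\bullet\otimes_\Ch N$ to $T(P_\bullet)\otimes_\DG N$. You have simply spelled out in more detail the steps the paper compresses into a single sentence, including the identification of the bigraded homology with $\mathrm{Tor}^R_{**}(M,N)$ and the caveat about semi-freeness.
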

\begin{proof}[Sketch Proof]
	Take a free resolution of $P\in\Ch_R$ of $M$. We obtain a corresponding map  $T(P)\to M$ of DG modules, which is now a \textit{semi-free} resolution of $M$, in particular a cofibrant replacement. The derived tensor product with $M$ is then, in the respective categories, the non-derived product with $P$ and $T(P)$, which one can verify by hand is preserved by $T$.
\end{proof}
This is the key to Proposition \ref{LocalSmall}:
\begin{proof}[Sketch Proof of Proposition \ref{LocalSmall}]
	As observed, the equivalences of ii), iii), and iv) are standard.

	The equivalence of i) and ii) comes from the observation that smallness is measured by the derived tensor product of a module with $\kappa$. That is, $M$ has a finite free resolution if and only if $\mathrm{Tor}^R_{**}(M,\kappa)$ is finite dimensional over $\kappa$, and is a small DG module if and only if $M\otimes_\DG\kappa$ has finite dimensional homology over $\kappa$, and so the equivalence comes from taking $N=\kappa$ in \ref{DGTensorProd}.
\end{proof}
\begin{rem}
	The assertion that $\kappa\otimes-$ measures smallness for DG modules is given, when $\kappa$ is an ordinary non-graded field, in Theorem 4.8 in \cite{HomOfPerfect}, and this paper also gives the equivalence of i) and ii) when in addition $R$ is graded non-negatively (or non-positively). That we can extend the result to our context is noting that the construction of minimal semifree resolutions also works when $\kappa$ is a graded field.
\end{rem}
Our principal use of \ref{LocalSmall} is the following:
\begin{cor}\label{LocalSmallCor}
	Let $(R,\m,\kappa)$ be a Noetherian local graded ring. $R$ is regular as a formal DG algebra (in that all DG modules with finitely generated homology are small) if and only if $R$ is regular as a local graded ring.

	If $\kappa=k$ is a field, then this is equivalent to regularity as a ring spectrum in the sense of \cite{DualityIn}, and if $R$ is an algebra generated over $k$ by all positive/all negative elements, this is equivalent to $R$ being a polynomial ring.
\end{cor}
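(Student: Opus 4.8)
The plan is to bootstrap Proposition~\ref{LocalSmall}, which only sees modules with zero differential, up to arbitrary DG modules with finitely generated homology, and then to transport the resulting commutative-algebraic statement through Shipley's equivalence \cite{HZAlg} into the language of ring spectra.

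For the first equivalence, one direction is immediate: if every DG $R$-module with finitely generated homology is small, then in particular $\kappa=R/\m$ (as a module with zero differential) is small, so Proposition~\ref{LocalSmall} gives it a finite free resolution, i.e.\ $\mathrm{pd}_R\kappa<\infty$, and the graded version of the Auslander--Buchsbaum--Serre characterisation of regularity (\cite{BrunsHerzog}) forces $R$ to be a regular local graded ring. Conversely, suppose $R$ is regular and let $M$ be a DG module with $H_*(M)$ finitely generated over $R_*$. By the criterion recalled after Proposition~\ref{LocalSmall}, which detects smallness via the derived tensor with $\kappa$, it suffices to show $H_*(M\otimes_R\kappa)$ is finite-dimensional over $\kappa$. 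Since $R$ has zero differential, $H_*(M)$ is a finitely generated graded $R$-module, and there is the standard spectral sequence
$$E^2_{p,*}=\mathrm{Tor}^R_p(H_*(M),\kappa)\ \Longrightarrow\ H_*(M\otimes_R\kappa).$$
As $R$ is regular, $\mathrm{gl.dim}\,R=\dim R=:d<\infty$, so this is concentrated in the strip $0\le p\le d$ and therefore converges with no boundedness assumption on $M$; and each $\mathrm{Tor}^R_p(H_*(M),\kappa)$ is the homology of a \emph{finite} complex of finitely generated free $\kappa$-modules (tensor a finite free resolution of the finitely generated module $H_*(M)$ with $\kappa$), hence is finite-dimensional. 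Thus the $E^2$-page, and so the abutment, is finite-dimensional, and $M$ is small.

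For the remaining clauses take $\kappa=k$ an ordinary field. Then $R$ is a formal DGA augmented over $k$, hence an $H\mathbb{Z}$-algebra via \cite{HZAlg}, and ``small'' is the same notion on either side of this identification. Unwinding the definition of \cite{DualityIn}, regularity of $R$ as a ring spectrum with residue field $k$ is exactly the statement that $k$ is a small $R$-module, which by the equivalence of i) and ii) in Proposition~\ref{LocalSmall} (with $M=k$) is the same as $\mathrm{pd}_R k<\infty$, i.e.\ as $R$ being a regular local graded ring, the condition just treated. If moreover $R$ is generated over $k$ by its elements of strictly positive (resp.\ strictly negative) degree, then $\m=R_{>0}$ (resp.\ $R_{<0}$) is the unique graded maximal ideal and $\kappa=k$ automatically; regularity then says $\dim R=\dim_k\m/\m^2$, and lifting a homogeneous basis of $\m/\m^2$, applying graded Nakayama to see it generates $R$, and Krull's principal ideal theorem to rule out relations, identifies $R$ with the polynomial ring on those generators (the evenly graded situation of the main text making ``polynomial'' literal); the converse is clear.

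The one point that is not routine is the passage from Proposition~\ref{LocalSmall} to honest DG modules: such a module need not be formal and its homology need not be bounded (for instance $R$ itself), so one cannot simply devissage it through a finite Postnikov tower of its homology. Routing the argument instead through the residue-field detection of smallness together with the $\mathrm{Tor}$ spectral sequence — whose convergence is guaranteed precisely because regularity supplies finite projective dimension — is exactly what sidesteps this; everything else is standard graded commutative algebra.
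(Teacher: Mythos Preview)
Your argument is correct, but the route you take for the implication ``$R$ regular $\Rightarrow$ every DG module with finitely generated homology is small'' differs from the paper's. The paper does not invoke the $\kappa$-detection criterion for \emph{arbitrary} DG modules at all: instead it inducts on $\mathrm{pd}_R(H_*M)$, lifting a surjection $F\twoheadrightarrow H_*M$ from a finite free module to a DG map $F\to M$, observing that the fibre $K$ has $H_*K=\ker(F\to H_*M)$ of projective dimension one less, and concluding by the triangle $K\to F\to M$; the base case $H_*M$ free gives $M\simeq H_*M$ directly. Your approach instead feeds the general smallness criterion (stated only in the Remark after Proposition~\ref{LocalSmall}, with a citation to \cite{HomOfPerfect}) together with the Tor spectral sequence for $M\otimes_R\kappa$, whose finite length when $R$ is regular guarantees both convergence and finite-dimensionality of the abutment. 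The paper's argument is marginally more self-contained, needing Proposition~\ref{LocalSmall} only for formal modules and avoiding any appeal to minimal semifree resolutions for general $M$; yours is more conceptual and makes transparent \emph{why} finite global dimension is exactly what is needed. For the remaining clauses (ring-spectrum regularity and the polynomial characterisation) your treatment agrees with the paper's, with a little more detail supplied.
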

\begin{proof} Take $M=\kappa$ in \ref{LocalSmall}: $\kappa$ is a small DG module if and only if it has a finite free resolution. The latter condition is equivalent to being regular as a local graded ring by Auslander-Buchsbaum-Serre, thus it suffices to show a DG module $M$ with finitely-generated homology is small if $\kappa$ is small.
	Indeed, if $\kappa$ is small, $R$ is regular as a local graded ring and so $H_*M$ has finite projective dimension $d$, over which we induct.

	If $d=0$, then $H_*M$ is a free module, thus $M$ is also, hence is small. Otherwise, choose a surjection $F\to H_*M$ where $F$ is a finitely generated free $R$-module.

	This defines a map $F\to M$ of DG modules, and let $K$ be its fibre. Evidently $H_*K$ has projective dimension $d-1$ thus $K$ is small. Since $F$ is small, it follows $M$ is too.\\

	The final two statements are clear: for the penultimate note regularity for an augmented ring spectrum $R\to k$ is precisely that $k$ is small over $R$, and the last one is a standard fact of graded rings.
\end{proof}
\begin{rem}\label{HomoAndCoeffRem}
	We have shown that for a Noetherian graded ring $R$ augmented over a field $k$, the homotopical notion of regularity of \cite{DualityIn} coincides with the ``coefficient'' notion. Another way of showing this is that, at least in some cases, the total complex functor $T$ preserves not only the derived tensor product but also the derived internal hom, possibly using the direct product total complex instead. That is, one can show there is an expression $$H_n\Big(\Hom_\DG(M,N)\Big)=\prod_{p+q=n}\mathrm{Ext}_R^{pq}(M,N)$$ analogous to \ref{DGTensorProd}. Taking $M=N=k$ shows that homotopical and coefficient notions of regularity coincide for formal $R$, and taking $M=k$ and $N=R$ show the notions coincide for Gorenstein rings also in the sense of \cite{DualityIn}. For Lie groups, this leads to an interesting comparison of the orientability of the adjoint representation to Watanabe's theorem (cf. \cite{PolyInv} 4.6.2) on when invariant rings are Gorenstein.
\end{rem}

\printbibliography
\end{document}